\newtheorem{theorem}{Theorem}[section]
\newtheorem{lemma}{Lemma}[section]
\newtheorem{proposition}{Proposition}[section]
\newtheorem{corollary}{Corollary}[section]
\theoremstyle{definition}
\newtheorem{definition}{Defintion}[section]
\newtheorem{notation}[definition]{Notation}
\newtheorem{convention}[definition]{Convention}
\theoremstyle{remark}
\newtheorem{remark}{Remark}[section]
\numberwithin{equation}{subsection}
\numberwithin{theorem}{section}
\newcommand*\colvec[1]{
	\global\colveccount#1
	\begin{pmatrix}
		\colvecnext
	}
	\def\colvecnext#1{
		#1
		\global\advance\colveccount-1
		\ifnum\colveccount>0
		\\
		\expandafter\colvecnext
		\else
	\end{pmatrix}
	\fi
}
\newcommand{\Go}{\mathsf{SO}_0(n+1,n)}
\newcommand{\Ga}{\mathsf{SO}_0(n+1,n)\ltimes\R^{2n+1}}
\newcommand{\sv}{\mathsf{v}}
\newcommand{\sw}{\mathsf{w}}
\newcommand{\sad}{\mathsf{Ad}}
\newcommand{\adj}{\mathsf{adj}}
\newcommand{\tr}{\mathsf{Tr}}
\newcommand{\sA}{\mathsf{A}}
\newcommand{\sH}{\mathsf{H}}
\newcommand{\sP}{\mathsf{P}}
\newcommand{\spr}{\mathsf{pr}}
\newcommand{\sQ}{\mathsf{Q}}
\newcommand{\sT}{\mathsf{T}}
\newcommand{\sth}{\mathsf{t}}
\newcommand{\sG}{\mathsf{G}}
\newcommand{\sM}{\mathsf{M}}
\newcommand{\cB}{\mathcal{B}}
\newcommand{\cD}{\mathcal{D}}
\newcommand{\cP}{\mathcal{P}}
\newcommand{\cH}{\mathcal{H}}
\newcommand{\cQ}{\mathcal{Q}}
\newcommand{\cG}{\mathcal{G}}
\newcommand{\cR}{\mathcal{R}}
\newcommand{\cO}{\mathcal{O}}
\newcommand{\cM}{\mathcal{M}}
\newcommand{\cL}{\mathcal{L}}
\newcommand{\R}{\mathbb{R}}
\newcommand{\st}{\mathsf{top}}
\newcommand{\sHom}{\mathsf{Hom}}
\newcommand{\defeq}{\mathrel{\mathop:}=}
\newcommand{\flow}{\mathsf{U}\Gamma}
\newcommand{\cflow}{\widetilde{\mathsf{U}_0\Gamma}}
\newcommand{\bdry}{\partial_\infty\Gamma}
\newcommand{\fg}{\mathfrak{g}}
\newcommand{\fh}{\mathfrak{h}}
\newcommand{\ft}{\mathfrak{t}}
\newcommand{\fk}{\mathfrak{k}}
\newcommand{\fu}{\mathfrak{u}}
\newcommand{\fF}{\mathfrak{F}}
\title[Margulis multiverse]{Margulis multiverse: infinitesimal rigidity,\\ pressure form and convexity}
\author{Sourav Ghosh}
\address{Department of Mathematics, Luxembourg and Ashoka University}
\email{sourav.ghosh@ashoka.edu.in, sourav.ghosh.bagui@gmail.com}
\date{\today}
\thanks{The author acknowledges support from the AGoLoM
	FNR grant OPEN/16/11405402 at University of Luxembourg and Ashoka University annual research grant.}
\begin{document}
	
	\begin{abstract}
		In this article we construct the pressure forms on the moduli spaces of higher dimensional Margulis spacetimes without cusps and study their properties. We show that the Margulis spacetimes are infinitesimally determined by their marked Margulis invariant spectra. We use this fact to show that the restrictions of the pressure form give Riemannian metrics on the constant entropy sections of the quotient moduli space. We also show that constant entropy sections of the moduli space with fixed linear parts bound convex domains.
	\end{abstract}
	
	\maketitle
	\tableofcontents
	
	\newpage
	
	\section*{Introduction}

	The goal of this article is to understand the geometry of the moduli space of Margulis spacetimes. Margulis spacetimes are examples of proper affine actions of free groups. The study of proper affine actions stems from the classification results of Bieberbach \cite{B1,B2} for Euclidean crystallographic groups (i.e. subgroups of  $\mathsf{O}(m)\ltimes\R^m$ which act properly discontinuously, freely and cocompactly on $\R^m$) (for more details see \cite{F1,Aus2,Buser}).  Biebarbach's results show that Euclidean crystallographic groups are virtually abelian. Later, in \cite{Aus} Auslander attempted at a proof that affine crystallographic groups (i.e. subgroups of $\mathsf{GL}(m)\ltimes\R^m$ which act properly discontinuously, freely and cocompactly on $\R^m$) are virtually solvable. The proof turned out to be incomplete and the statement was rechristened as the Auslander conjecture. The conjecture is still open but it has been shown to hold true for the three dimensional case by Fried--Goldman \cite{FG} and for dimension less than 7 by Abels--Margulis--Soifer \cite{AMS2}. Moreover, in \cite{Milnor} Milnor wondered if the cocompactness assumption in the Auslander conjecture could be dropped. Answering Milnor's question, Margulis \cite{Margulis1,Margulis2} showed that if the cocompactness assumption is dropped from the Auslander conjecture then the conjecture no longer holds true. In particular, he showed the existence of proper affine actions of non-abelian free groups on $\R^3$. He used the north-south dynamics of Schottky actions to prove the existence of such actions. These examples were a surprise to the community due to the counter intuitiveness of its actions. An interesting property of proper affine actions observed by Fried--Goldman \cite{FG} states that the Zariski closure of the linear part of such an action necessarily lies in some conjugate of $\mathsf{SO}(2,1)$. Later, Drumm \cite{Drumm2} classified the linear parts of such actions on $\R^3$. He showed that any free finitely generated discrete subgroup of $\mathsf{SO}_0(2,1)$ admits affine deformations which act properly on $\R^3$. In this article, we will only be interested in the case where the linear parts of the non-abelian free subgroups of the affine group, which give rise to proper actions, do not contain any parabolic elements.
	
	These proper actions admit very interesting fundamental domains. In \cite{Drumm1} Drumm constructed explicit fundamental domains for a large class of them whose boundary consists of the so called crooked planes. Recently, it was shown by Charrette--Drumm--Goldman \cite{CDG1} for the case of two generator Fuchsian groups and by Danciger--Gu\'eritaud--Kassel \cite{DGK2} for the general case that any such proper affine action admits a fundamental domain bounded by crooked planes. 
	
	The construction of proper affine actions due to Margulis was later generalized by Abels--Margulis--Soifer \cite{AMS}. Let $\cQ$ be a non-degenerate quadratic form on $\sT:=\R^{2n+1}$ of signature $(n+1,n)$ and let $\sH\defeq\Go$ be the connected component containing identity of its isometry group. They showed, using a proximality argument, that for odd $n$ the group $\sG:=\sH\ltimes\sT$ admits non-abelian free subgroups which act properly discontinuously and freely on $\sT$ and for even $n$ the group $\sG$ does not admit any such subgroups. Subsequently but out of completely different motivations, Labourie \cite{Labourie} came up with the notion of an Anosov representation. Furthermore, it was realized by Guichard--Wienhard in \cite{GW2}, that the notion of an Anosov representation is closely related with the notion of proximality used in \cite{AMSproximal,AMS}. It follows that the examples constructed by Abels--Margulis--Soifer have Anosov linear parts. In this article we generalize the results of \cite{Ghosh1} and construct the pressure form on the moduli space of such proper actions and study its properties. 
	
	Let $\Gamma$ be a word hyperbolic group and let $\sP$ be the stabilizer of a maximal isotropic plane under the linear action of $\sH$ on $\sT$. Let $\rho:\Gamma\to\sG$ be an injective homomorphism. We denote its \emph{linear part} by $L_\rho$ and its \emph{translational part} by $u_\rho$ i.e. $L_\rho:\Gamma\to\sH$, $u_\rho:\Gamma\to\sT$ and $\rho=(L_\rho,u_\rho)\in\sH\ltimes\sT=\sG$. We say $\rho$ is a Margulis spacetime if and only if its linear part $L_\rho$ is $\sP$-Anosov (see subsection \ref{ss:ano} for the definition) in $\sH$ and $\rho(\Gamma)$ acts properly on $\sT$. We denote the space of $\sP$-Anosov, Zariski dense representations in $\sH$ whose conjugacy classes are also smooth points of the representation variety $\sHom(\Gamma,\sH)/\sH$ by $\sHom_{\sA}(\Gamma,\sH)$. We denote the space of representations $\rho:\Gamma\rightarrow\sG$ whose linear part is $\sHom_{\sA}(\Gamma,\sH)$ by $\sHom_{\sA}(\Gamma,\sG)$.	
	In this article, we only consider Margulis spacetimes $\rho\in\sHom_{\sA}(\Gamma,\sG)$ and denote the space of such Margulis spacetimes by $\sHom_{\sM}(\Gamma,\sG)$. The natural projection map  $L:\sHom_{\sA}(\Gamma,\sG)\rightarrow\sHom_{\sA}(\Gamma,\sH)$ which takes a representation to its linear part endows $(\sHom_{\sA}(\Gamma,\sG),\sHom_{\sA}(\Gamma,\sH),L)$ with a vector bundle structure. We observe that $\sG$ acts on $\sHom_{\sM}(\Gamma,\sG)$ via conjugation and define \[\sM(\Gamma,\sG)\defeq\sHom_{\sM}(\Gamma,\sG)/\sG.\]
	We note that $\sM(\Gamma,\sG)$ can be empty. If $\Gamma$ is free, that happens if and only if $n$ is even. We assume from now on that $\Gamma$ and $\sG$ are such that $\sM(\Gamma,\sG)$ is non-empty.
	
	In \cite{Margulis1,Margulis2} Margulis also introduced the notion of a Margulis invariant to detect proper affine actions. Suppose $g=(h,u)\in\sG=\sH\ltimes\sT$ is such that $h$ is pseudo-hyperbolic. Then $h$ has a one dimensional eigenspace of unit eigenvalue. Hence there are exactly two eigenvectors of unit eigenvalue and unit norm. One can consistently choose one of these two eigenvectors for all pseudo-hyperbolic elements such that certain orientation convention is preserved and define the Margulis invariant $\alpha(g)$ as the length of the projection of $u$ in the direction of this eigenvector (for more details please see Definition \ref{def:marginv}). Margulis showed that the Margulis invariant spectrum of a Margulis spacetime cannot change sign and conjectured that the converse is also true. A slightly modified version of the converse was proved in the Fuchsian case by Goldman--Labourie--Margulis \cite{GLM} and in the general case by Ghosh--Treib \cite{GT}. They used a diffused version of the Margulis invariant, called the Labourie--Margulis invariant, first introduced by Labourie in \cite{Labourie2} and showed that a representation in $\sHom_{\sA}(\Gamma,\sG)$ is a Margulis spacetime if and only if its Labourie--Margulis invariant is non-vanishing. 
	
	In this article, we use this result and the stability of Anosov representations from \cite{Labourie,GW2} to prove the following:
	\begin{proposition}
		Suppose $\sM(\Gamma,\sG)$ is the space of all Margulis space times as defined above. Then $\sM(\Gamma,\sG)$ is an open subset of the character variety and it is a fibered space. Moreover, each fiber is a disjoint union of two open convex cones in some vector space which differ by a sign.
	\end{proposition}
	
	Furthermore, we use the non-vanishing Labourie-Margulis invariant of a Margulis spacetime, the Anosov structure on its linear part and the theory of thermodynamic formalism developed by Bowen, Ruelle, Parry and Pollicott to show that the growth rate of the number of closed orbits whose Margulis invariants are less than some variable is exponential in that variable. In fact, we also show that the Labourie--Margulis invariant varies analytically over the moduli space of Margulis spacetimes. Suppose $\rho\in\sHom_{\sM}(\Gamma,\sG)$ and let $\cO$ denote the set of periodic orbits of the Gromov flow space (please see Remark \ref{rem:gromovflow}). We denote, $R_T(\rho)\defeq\{\gamma\in\cO\mid\alpha(\rho(\gamma))\leqslant T\}$ and consider the following limit,
	\[h_\rho=\lim_{T\to\infty}\frac{1}{T}\log|R_T(\rho)|.\]
	Also, for $\rho_1\in\sHom_{\sA}(\Gamma,\sG)$ we consider
	\[I(\rho,\rho_1)=\lim_{T\to\infty}\frac{1}{|R_T(\rho)|}\sum_{\gamma\in R_T(\rho)}\frac{\alpha_{\rho_1}(\gamma)}{\alpha_\rho(\gamma)}.\]
	In fact, we prove the following:
	\begin{lemma}
		 Suppose $\rho\in\sHom_{\sM}(\Gamma,\sG)$ and $\rho_1\in\sHom_{\sA}(\Gamma,\sG)$. Then $h_\rho$ and $I(\rho,\rho_1)$ exist and are finite. Moreover, $h_\rho$ is positive and the maps $h,I$ are analytic.
	\end{lemma}
		Now applying the theory of thermodynamic formalism for the analytic maps $h$ and $I$, we define a pressure form on the quotient moduli space. Suppose $\{\rho_t\}_{t\in\R}\subset\sHom_{\sM}(\Gamma,\sG)$ is an analytic one parameter family such that $\rho_0=\rho$ and ${U}(\gamma)=\left.\frac{d}{dt}\right|_{t=0}\rho_t(\gamma)\rho(\gamma)^{-1}$ for all $\gamma\in\Gamma$. Let $\fg$ denote the Lie algebra of $\sG$. We observe that $[{U}]\in\mathsf{H}^1_{\sad\circ\rho}(\Gamma,\fg)\cong \sT_{[\rho]}\sM(\Gamma,\sG)$ (please see Remark \ref{rem:dermarg}) and define
		\[\spr_{[\rho]}([{U}],[{U}])\defeq\left.\frac{d^2}{dt^2}\right|_{t=0}\frac{h_{\rho_t}}{h_\rho}I(\rho,\rho_t)\]	
	Moreover, we show that:
	\begin{proposition}\label{intprop:prm}
		The map $\spr:\sT{\sM}(\Gamma,\sG)\times\sT{\sM}(\Gamma,\sG)\rightarrow\R$ is well defined and it defines a positive semi-definite bilinear form. Moreover, if
		\[\left.\frac{d}{dt}\right|_{t=0}h_{\rho_t}=0,\]
		then $\spr_{[\rho]}([{U}],[{U}])=0$ implies $[{U}]=0\in\mathsf{H}^1_{\sad\circ\rho}(\Gamma,\fg)\cong\sT_{[\rho]}\sM(\Gamma,\sG)$.
		
	\end{proposition}
	In fact, while characterizing the norm zero vectors of the pressure form in Proposition \ref{intprop:prm}, we use an infinitesimal marked Margulis invariant spectrum rigidity result for Margulis spacetimes. Indeed, we introduce a new algebraic expression for the Margulis invariants involving adjugates of the linear parts. Let $\mathfrak{gl}(2n+1,\R)$ be the space of all $(2n+1)\times(2n+1)$ matrices, let $\alpha(g)$ be the Margulis invariant of any $g=(h,u)\in\sG=\sH\ltimes\sT$ whose linear part $h$ is pseudo-hyperbolic and let $\adj(h)$ be the adjugate of $h\in\mathfrak{gl}(2n+1,\R)$ (for more details please see Definition \ref{def:adjugate}; also see Section 0.8.2 of \cite{HJ}).
	\begin{lemma}
		 Suppose $g=(h,u)\in\sH\ltimes\sT=\sG$ with $h$ pseudo-hyperbolic. Then the square of the Margulis invariant can be written as a rational function in $g$ as follows:
		\[\alpha(g)^2=\frac{\langle[\adj(I-h)]u\mid u\rangle}{\tr[\adj(I-h)]}.\]
	\end{lemma}
	Suppose $(g,G)\in\sG\ltimes_\sad\fg$ and  $\{g_t\}_{t\in(-1,1)}\in\sG$ is an analytic one parameter family such that $G=\left.\frac{d}{dt}\right|_{t=0}g_tg^{-1}$. We assume that the respective linear parts of $g_t$ are pseudo-hyperbolic. Then we define
	\[\dot{\alpha}(g,G)\defeq\left.\frac{d}{dt}\right|_{t=0}\alpha(g_t).\]
	We use the algebraic property of the Margulis invariant and a modified version of the Margulis type argument for marked length spectrum rigidity, to prove the following result:
	\begin{theorem}
		Suppose $(\rho,{U})\in\sHom(\Gamma,\sG\ltimes_\sad\fg)$ such that $\rho\in\sHom_{\sM}(\Gamma,\sG)$.
		Then there exists $G\in\fg$ such that ${U}(\gamma)=\sad(\rho(\gamma))(G)-G$ for all $\gamma\in\Gamma$ if and only if for all $\gamma\in\Gamma$ the following holds:
		\[\dot{\alpha}(\rho(\gamma),U(\gamma))=0.\]
	\end{theorem}
	The above Theorem is an infinitesimal version of the marked Margulis invariant spectrum rigidity results obtained by Drumm--Goldman \cite{DG}, Charette--Drumm \cite{CD} and Kim \cite{Kim}. They proved that the marked Margulis invariant spectrum of a Margulis spacetime uniquely determines the Margulis spacetime up to conjugacy.
	
	Finally, we use Proposition \ref{intprop:prm} and observe that the constant entropy sections play a very important role in the study of the geometry of the quotient moduli space. Hence, in the last part of our article we study the constant entropy sections in more detail. We call sections of the quotient moduli space with constant entropy $k$ to be the \textit{Margulis} \textit{multiverse} of entropy $k$ and denote them repectively by $\sM(\Gamma,\sG)_k$. We show that,
	\begin{lemma}\label{intlem:ansub}
		Suppose $\sM(\Gamma,\sG)_k$ are the constant entropy sections of $\sM(\Gamma,\sG)$. Then $\sM(\Gamma,\sG)_k$ is an analytic submanifold of $\sM(\Gamma,\sG)$ of codimension one and
		\[\sM(\Gamma,\sG)_1=\{(L_\rho,h(\rho)u_\rho)\mid\rho\in\sM(\Gamma,\sG)\}.\]
	\end{lemma}
	Moreover, we combine Lemma \ref{intlem:ansub} and Proposition \ref{intprop:prm} to obtain the following result describing the pressure form:
	\begin{theorem}
		Suppose $\sM(\Gamma,\sG)_k$ are the constant entropy sections of $\sM(\Gamma,\sG)$. Then the restriction of the pressure form 
		\[\spr:\sT\sM(\Gamma,\sG)_k\times\sT\sM(\Gamma,\sG)_k\rightarrow\R\] 
		is a Riemannian metric for all $k>0$ and the pressure form on $\sM(\Gamma,\sG)$ is positive semi-definite with rank $\dim(\sM(\Gamma,\sG))-1$. 
	\end{theorem}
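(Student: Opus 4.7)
My plan is to split the argument into two steps, treating Proposition \ref{intprop:prm} and Lemma \ref{intlem:ansub} as black boxes.

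First I would prove that the restriction of $\spr$ to $\sT\M(\Gamma,\sG)_k$ is positive definite. By Proposition \ref{intprop:prm}, $\spr$ is already non-negative definite, so it suffices to rule out non-trivial null vectors. Given $[\dot\rho]\in\sT_{[\rho]}\M(\Gamma,\sG)_k$ represented by an analytic path $\rho_t$, Lemma \ref{intlem:ansub} gives $h_{\rho_t}\equiv k$, so $\left.\frac{d}{dt}\right|_{t=0}h_{\rho_t}=0$. If in addition $\spr_{[\rho]}([\dot\rho],[\dot\rho])=0$, the characterization of null vectors in Proposition \ref{intprop:prm} forces $[\dot\rho]=0$. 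Combined with non-negativity, $\spr$ restricts to a Riemannian metric on each $\M(\Gamma,\sG)_k$ for $k>0$.

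Second, to pin down the signature on the full moduli space I observe that, since $\M(\Gamma,\sG)_k$ has codimension one in $\M(\Gamma,\sG)$, the first step already forces the positive part of $\spr$ at any point to have dimension at least $n-1$, while non-negativity forces the negative part to vanish. The signature is therefore $(p,0)$ with $p\in\{n-1,n\}$, and it only remains to exhibit one non-zero kernel direction. I would use the radial rescaling of the translation cocycle along the fiber of the projection $\M(\Gamma,\sG)\to\sHom_{\sA}(\Gamma,\sH)/H$, whose existence as a cone direction is guaranteed by the opening proposition of the excerpt. Explicitly, set $\rho_t\defeq e^t\rho$, keeping $L_\rho$ fixed and scaling the translation cocycle by $e^t$. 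Then $\alpha_{\rho_t}(\gamma)=e^t\alpha_\rho(\gamma)$ for every infinite order $\gamma$; the reparametrization $R_T(\rho_t)=R_{e^{-t}T}(\rho)$ in the limit defining $h$ yields $h_{\rho_t}=e^{-t}h_\rho$, and the definition of $I$ gives $I(\rho,\rho_t)=e^t$. Hence
\[
\frac{h_{\rho_t}}{h_\rho}I(\rho,\rho_t)\equiv 1,
\]
so $\spr_{[\rho]}([\dot\rho],[\dot\rho])=0$. Because $\left.\frac{d}{dt}\right|_{t=0}h_{\rho_t}=-h_\rho\neq 0$ and $h$ is a conjugation invariant, $[\dot\rho]$ is non-zero in the quotient; this produces the required one-dimensional kernel and forces the signature to be exactly $(n-1,0)$.

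The main obstacle I anticipate is rigorously justifying the two rescaling identities $h_{\rho_t}=e^{-t}h_\rho$ and $I(\rho,\rho_t)=e^t$ — in particular, checking that the limits defining $h$ and $I$ behave well under the substitution $T\mapsto e^{-t}T$ — and separately confirming that the radial direction is genuinely transverse to the $\sG$-orbit, so that $[\dot\rho]$ represents a non-trivial class in $\sT_{[\rho]}\M(\Gamma,\sG)$ rather than an infinitesimal conjugation; this latter point reduces to the conjugation-invariance of $h$ combined with $\dot h\neq 0$ along the radial direction.
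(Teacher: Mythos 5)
Your argument is correct and matches the paper's proof in both steps: the Riemannian property on $\M(\Gamma,\sG)_k$ follows from the non-negativity and kernel characterization of $\spr$ together with the codimension-one submanifold statement, and the null direction is exhibited by radially rescaling the translation cocycle (the paper uses $\rho_t=(1+t)\rho$ rather than $e^t\rho$, an inessential reparametrization). The two scaling identities you flag as the main obstacle are proved in the paper as a separate lemma (Lemma~\ref{lem:enteq}) by precisely the substitution $T\mapsto T/a$ in the defining limits; and your extra check that the radial vector is non-zero in the quotient, via conjugation-invariance of $h$ and $\dot h\neq 0$, is a worthwhile point the paper leaves implicit.
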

	In fact, we give a description of the degenerate direction of the pressure form too. Lastly, we conclude our article by showing a convexity result for the Margulis multiverses. We show that the constant entropy sections with fixed linear part bound a convex domain.
	\begin{theorem}
		Suppose $\varrho\in\sHom_{\sA}(\Gamma,\sH)$ and $u_0,u_1:\Gamma\to\sT$ are such that $(\varrho,u_0),(\varrho,u_1)\in\sHom_{\sM}(\Gamma,\sG)$ with $h(\varrho,u_0)=k=h(\varrho,u_1)$. Then for all $t\in(0,1)$ the representations $(\varrho,(1-t)u_0+tu_1)\in\sHom_{\sM}(\Gamma,\sG)$ and the following holds: 
		\[h(\varrho,(1-t)u_0+tu_1)< k.\]
	\end{theorem}
	Similar convexity results of this nature but in different contexts were obtained by Quint in \cite{Q} and by Sambarino in \cite{Samba1}.
	
	\section*{Acknowledgements}
	
	I would like to thank Prof. Fran\c cois Labourie for explaining to me the rigidity argument due to Prof. Gregory Margulis and the anonymous referee for many helpful suggestions. I would also like to thank the University of Luxembourg for hosting me while doing this research.

	\section{Preliminaries}
	
	In this section we give an overview of the theory of Thermodynamic formalism, Anosov representations and Margulis spacetimes. Our overview will be minimal and will only state results which play an essential role in proving the results of this article.

	\subsection{Thermodynamic formalism}\label{ss:tf}
	
	In this subsection we provide a brief overview of the theory of thermodynamic formalism. Thermodynamic formalism is the study of dynamical systems using intuition coming from Thermodynamics of Physics. It was developed by Bowen, Parry--Pollicott, Pollicott, Ruelle and others. In Section \ref{sec:pfc}, we will use this theory to define and study a quadratic form $\spr$ on $\sM(\Gamma,\sG)$, called the pressure form. 
	
	\begin{definition}
		Suppose $X$ is a metric space along with a flow $\phi$. Then $\phi$ is called topologically transitive if and only if given any two open sets $U,V\subset X$, there exists $t\in\R$ such that $\phi_t(U)\cap V$ is non-empty.
	\end{definition}
	\begin{definition}
		A lamination $\cL$ on $X$ is a partition of $X$ such that there exist two topological spaces $U_1, U_2$ and for all $p\in X$ there exist an open set $O_p$ and a homeomorphism $h_p\defeq(h_p^1,h_p^2):O_p\rightarrow U_1\times U_2$ satisfying the following properties:
		\begin{enumerate}
			\item For all $z,w\in O_p\cap O_q$: $h_p^1(z)=h_p^1(w)$ if and only if $h_q^1(z)=h_q^1(w)$,
			\item $p\cL q$ if and only if there exist $\{p_i\}_{i=0}^n$ with $p_0=p$ and $p_n=q$ such that $p_{i+1}\in O_{p_i}$ and $h_{p_i}^1(p_i)=h_{p_i}^1(p_{i+1})$ for all $i$.
		\end{enumerate}
	\end{definition}
	\begin{definition}
		Suppose $X$ is a compact metric space along with a flow $\phi$. Then $\phi$ is called metric Anosov if and only if there exist two laminations $\cL^\pm$ satisfying the following properties:
		\begin{enumerate}
			\item The laminations $\cL^+$ and $\cL^-$ are transverse to the flow lines,
			\item Both $(\cL^-,\cL^{+,0})$ and $(\cL^+,\cL^{-,0})$ define a local product structure, where for all $x\in X$, 
			\[\cL^{+,0}_x\defeq\cup_{t\in\R}\phi_t\cL^+_x \text{ and } \cL^{-,0}_x\defeq\cup_{t\in\R}\phi_t\cL^-_x,\]
			\item $\cL^+$ (respectively $\cL^-$) is contracted by the flow (respectively by the inverse flow).
		\end{enumerate}
	\end{definition}
	\begin{remark}
		We note that the above property of a flow on a compact metric space was first introduced under the name Smale flow by Pollicott in \cite{Pol}. Later, these flows were adapted in \cite{BCLS} under the name metric Anosov flows, to study projective Anosov representations. In this article, we use the definition given in \cite{BCLS}.
	\end{remark}
	
	\begin{definition}
		Let $X$ be a compact metric space along with a H\"older continuous flow $\phi$ which has no fixed points and let $f,g:X\rightarrow\R$ be two H\"older continuous functions. Then $f$ and $g$ are called Liv\v sic cohomologous if and only if there exists a H\"older continuous function $h:X\rightarrow\R$ which is differentiable along flow lines of $\phi$ and the following holds:
		\[f-g=\left.\frac{\partial}{\partial t}\right|_{t=0}h\circ\phi_t.\]
	\end{definition}
	\begin{definition}
		Suppose $f:X\rightarrow\R$ is a H\"older continuous function then the Liv\v sic cohomology class $[f]$ is called positive (respectively negative) if and only if there exists a H\"older continuous function $g:X\rightarrow\R$ such that $g>0$ (respectively $g<0$) and $[f]=[g]$. 
	\end{definition}

	Now we define what is called the topological pressure. In order to do that, we define the notion of a $(T,\epsilon)$ separated set: given $\epsilon,T>0$ we call a subset $S\subset X$ to be $(T,\epsilon)$ separated if and only if for all $p,q\in S$ with $p\neq q$ there exists $t\in[0,T]$ such that $d(\phi_t p,\phi_t q)>\epsilon$. Let $f:X\rightarrow\R$ be a continuous function. We define:
	\[Z(f,T,\epsilon)\defeq\sup\left\{\sum_{p\in S}\exp{\left[\int_{0}^Tf(\phi_tp)dt\right]}\mid S \text{ is } (T,\epsilon)\text{-separated}\right\}\]
	and
	\[P(\phi,f)\defeq\lim_{\epsilon\to0}\limsup_{T\to\infty}\frac{1}{T}\log Z(f,T,\epsilon).\]
	\begin{definition}
		Suppose $(X,d)$ is a compact metric space, $\phi$ is a metric Anosov flow on $X$ and suppose $f:X\rightarrow\R$ is a continuous function. Then $P(\phi,f)$ does not depend on the metric $d$ up to inducing the same topology and it is called the \emph{topological pressure} of $f$ with respect to the flow $\phi$. The topological pressure of the zero function is called the \emph{topological entropy} of the flow $\phi$ and denoted by $h_{\st}(\phi)$, i.e,  $h_{\st}(\phi)\defeq P(\phi,0)$.
	\end{definition}
	    
    \begin{remark}\label{rem:posent}
		Remark 15 on page 209 of \cite{Walters} and the discussion in the paragraph just above Theorem 5 in \cite{Pol} imply that the above limit is well defined. Moreover, Proposition 1 of \cite{Pol},  Remark 9 on page 166, Theorem 7.8 on page 174 and Corollary 7.11.1 on page 177 of \cite{Walters} imply that $h_{\st}(\phi)$ is non-negative and finite. 
	\end{remark}

	The topological pressure defined above satisfies a variational principle which plays a central role in the theory of thermodynamic formalism. Let $\cB$ be the Borel sigma algebra of $X$ and let $\mu$ be a Borel probability measure on $X$ which is invariant under $\phi$. Moreover, let $\fF$ be the set of all finite measurable partitions of $X$ and for $\cD_1,\cD_2\in\fF$, let
	\[\cD_1\vee\cD_2\defeq\{D_{1,i}\cap D_{2,j}\mid D_{1,i}\in\cD_1, D_{2,i}\in\cD_2\}.\]
	We consider the unit time flow $T\defeq\phi_1$ and for any finite measurable partition $\cD\in\fF$, denote $\cD_{(n)}\defeq\cD\vee T^{-1}\cD\vee\dots\vee T^{-n+1}\cD$. We define the measure theoretic entropy of $T$ with respect to $\mu$ as follows:
	\begin{align*}
	h(T,\mu)\defeq \sup_{\cD\in\fF}\left(\lim_{n\to\infty}\sum_{D\in\cD_{(n)}}\left[-\frac{\mu(D)\log\mu(D)}{n}\right]\right).
	\end{align*}
	\begin{remark}
		Due to Abramov's Theorem \cite{Abra} we can use the unit time flow to define the measure theoretic entropy of $\phi$ i.e. $h(\phi,\mu)\defeq h(T,\mu)$.
	\end{remark}
	\begin{theorem}[Bowen--Ruelle, Pollicott]\label{thm:bp}
		Suppose $\phi$ is a topologically transitive metric Anosov flow on the compact metric space $X$ and let $\cM^\phi$ be the space of $\phi$-invariant Borel probability measures on $X$ and suppose $f:X\rightarrow\R$ is any continuous function. Then the measure theoretic entropy and the topological pressure satisfy the following variational formula:
		\[P(\phi,f)=\sup_{\mu\in\cM^\phi}\left(h(\phi,\mu)+\int f d\mu\right).\]
	\end{theorem}
	
	\begin{remark}
		Theorems 5 and 6 of \cite{Pol} (see also Section 7.28 of \cite{ru}) imply that in Theorem \ref{thm:bp} if $f$ is H\"older continuous then there exists a unique measure $\mu_{\phi,f}$ such that
		\[P(\phi,f)=h(\phi,\mu_{\phi,f})+\int f d\mu_{\phi,f}.\]
		The measure $\mu_{\phi,f}$ is called the \emph{equilibrium state} of $f$ with respect to $\phi$. The equillibrium state of the zero function with respect to $\phi$ is called the \emph{measure of maximal entropy} of $\phi$ and is denoted by $\mu_\phi$ i.e.
		\[\mu_\phi=\mu_{\phi,0}.\]
	\end{remark}
	Please see Chapter 9 (in particular Theorems 9.7 and 9.8) of \cite{Walters} for more properties of pressure and equilibrium state.
	
	\begin{definition}
	    Let $f:X\to\R$ be a positive H\"older continuous function and for all $x\in X$, $t\in\R$, let $\beta_f(x,t)$ be such that 
	    \[\int_0^{\beta_f(x,t)}f(\phi_sx)ds=t.\]
	    Then the flow $\phi^f$ such that $\phi^f_t(x):=\phi_{\beta_f(x,t)}(x)$
	    is called the \emph{reparametrization} of the flow $\phi$ by the function $f$. Moreover, if $\phi$ is a topologically transitive metric Anosov flow and $\mu_{\phi^f}$ is the measure of maximal entropy for the reparametrized flow, then for any continuous function $g:X\to\R$, the following quantity 
	    \[I(f,g):=\int\frac{g}{f}d\mu_{\phi^f}\]
	    is called the \emph{intersection} of $f$ with $g$.
	\end{definition}
	\begin{remark}
		Similar concepts to the function $I$ defined above have been previously studied in the literature, for example by Bonahon \cite{Bona} in the context of the Teichm\"uller space and was later studied in \cite{BCLS,burg} and \cite{kni}. 
	\end{remark}
	
    We now list a few important properties of entropy, intersection and pressure which will play a central role in our article. 
    \begin{lemma}[Sambarino \cite{Samba}(Lemma 2.4)]\label{lem:samba}
        Suppose $\phi$ is a H\"older continuous flow on the compact metric space $X$ and $f:X\to\R$ is a positive H\"older continuous function. Then
	    \[P(\phi,-hf)=0 \text{ if and only if } h=h_{\st}(\phi^f).\]
    \end{lemma}
    We denote the set of periodic orbits of $\phi$ inside $X$ by $\cO$ and for any $a\in\cO$ we denote its \emph{period} by $p(a)$. Moreover, for any real valued continuous function $f$ on $X$ and for any $x\in a\in\cO$, we use the following notation:
    \[\int_a f:=\frac{1}{p(a)}\int_0^{p(a)}f(\phi_sx)ds .\]
    
    \begin{theorem}[Liv\v sic \cite{Liv}]\label{thm:liv}
        Suppose $\phi$ is a topologically transitive metric Anosov flow on a compact metric space $X$ and let $f:X\to\R$ be a H\"older continuous function. Then 
        \[\int_af=0\]
        for all $a\in\cO$ if and only if
        $f$ is Liv\v sic cohomologous to zero.
    \end{theorem}
    If  $f$ is a positive H\"older continuous function on $X$ and $K\in\R$, then
    \[R_K(f):=\left\{a\in\cO\mid p(a)\int_a f\leqslant K\right\}.\]
    We denote the cardinality of $R_K(f)$ by $|R_K(f)|$. 
    \begin{theorem}[Bridgeman--Canary--Labourie--Sambarino\cite{BCLS}]\label{thm:bcls}
        Suppose $\phi$ is a topologically transitive metric Anosov flow on a compact metric space $X$. Then for any real valued H\"older continuous function $f$ on $X$ with $f>0$ and any real valued continuous function $g$ on $X$ we have
            \[I(f,g)=\lim_{K\to\infty}\frac{1}{|R_K(f)|}\sum_{a\in R_K(f)}\frac{\int_ag}{\int_af} .\]
    \end{theorem}
    \begin{theorem}[Bowen, Ruelle and Parry--Pollicott]\label{thm:brpp}
        Suppose $\phi$ is a topologically transitive metric Anosov flow on a compact metric space $X$. Then
        \begin{enumerate}
            \item for any real valued H\"older continuous function $f$ on $X$ with $f>0$, the topological entropy of $\phi^f$ is finite and positive. Moreover, it satisfies the following identity:
            \[h_{\st}{\left(\phi^f\right)}=\lim_{K\to\infty}\frac{1}{K}\log |R_K(f)| ,\]
            \item for any real valued H\"older continuous function $g$ on $X$ we have
            \[P(\phi,g)=\lim_{K\to\infty}\frac{1}{K}\log\left(\sum_{a\in R_K(1)}e^{(p(a)\int_ag)}\right) ,\]
            \item the functions $h,P$ and $I$ vary analytically over the variables $f$ and $g$.
        \end{enumerate}
    \end{theorem}
    \begin{remark}
        The above result has been stitched together from results appearing in the following works: \cite{Bowen1,BoRu,Pol, parpol, ru}.  
    \end{remark}
    
    Suppose $f,g$ are two H\"older continuous functions such that $\int gd\mu_{\phi,f}=0$. Then the \emph{variance} of $g$ with respect to $f$ is defined as
    \[{V}(g,\mu_{\phi,f}):=\lim_{K\to\infty}\frac{1}{K}\int\left(\int_0^{K}g(\phi_sx)ds\right)^2d\mu_{\phi,f}(x).\]
    \begin{proposition}[Parry--Pollicott\cite{parpol}, Ruelle\cite{ru}] \label{prop.var}
        Suppose $f,g$ are two H\"older continuous functions on a compact metric space $X$ which admits a metric Anosov flow $\phi$. Then
        \[\left.\frac{\partial P(\phi,f+tg)}{\partial t}\right|_{t=0}=\int gd\mu_{\phi,f}.\]
        Moreover, the following holds: 
        \begin{enumerate}
            \item If $\int gd\mu_{\phi,f}=0$, then
        \[\left.\frac{\partial^2 P(\phi,f+tg)}{\partial t^2}\right|_{t=0}={V}(g,\mu_{\phi,f}).\]
        \item If ${V}(g,\mu_{\phi,f})=0$, then $g$ is Liv\v sic cohomologous to zero.
        \end{enumerate}
    \end{proposition}
    We consider the set of pressure zero H\"older continuous functions
    \[\cP_\phi(X):=\{f\mid P(\phi,f)=0\}\]
    and its tangent space at $f\in\cP_\phi(X)$, denoted by $\sT_{f}\cP_\phi(X)$. Using Proposition \ref{prop.var} we obtain that
    \[\sT_{f}\cP_\phi(X)=\left\{g\mid\int gd\mu_{\phi,f}=0\right\}.\] 
    \begin{definition}
    	Suppose $X$ is a compact metric space with a metric Anosov flow $\phi$. Suppose $\cP_\phi(X)$ is the space of pressure zero H\"older continuous functions on $X$. Then for any $f\in\cP_\phi(X)$, the \emph{pressure form} on $\sT_{f}\cP_\phi(X)$ is a bilinear map
    	$\spr_f:\sT_{f}\cP_\phi(X)\times \sT_{f}\cP_\phi(X)\to\R$ such that
    	\[\spr_f(g,g):=-\frac{{V}(g,\mu_{\phi,f})}{\int fd\mu_{\phi,f}}.\]
    \end{definition}
	\begin{remark}
		The existence of the pressure form on representation varieties was shown in \cite{BCLS}, using Propositions 4.10-4.12 of \cite{parpol}, Corollary 7.12 of \cite{ru} and Equation 1.5 (Section 1.4) of \cite{mcmu}. In fact, the pressure form made its first appearance in a similar context in \cite{mcmu}. Building upon previous works of Bowen--Series \cite{BS} and Bridgeman--Taylor \cite{BT}, McMullen showed its existence on the space of Liv\v sic cohomology classes of pressure zero H\"older continuous functions on a shift space.
	\end{remark}
    
    We end this section with the following useful result:
    \begin{proposition}[see Section 3 of \cite{BCLS}]\label{prop:bcls}
        Let $\{f_t\}_{t\in(-1,1)}$ be a one parameter family of positive H\"older continuous functions on a compact metric space $X$ which admits a metric Anosov flow $\phi$. Then
        \[h_{\st}{\left(\phi^{f_t}\right)}I(f_0,f_t)\geqslant h_{\st}{\left(\phi^{f_0}\right)}\]
        with equality if and only if $h_{\st}{\left(\phi^{f_0}\right)}f_0$ is Liv\v sic cohomologous to $h_{\st}{\left(\phi^{f_t}\right)}f_t$. Moreover, for $g=\frac{\partial f_t }{\partial t}|_{t=0}$,
        \[\spr_{f_0}(g,g)=\left.\frac{\partial^2 }{\partial t^2}\right|_{t=0} \frac{h_{\st}{\left(\phi^{f_t}\right)}}{h_{\st}{\left(\phi^{f_0}\right)}}I(f_0,f_t)\geqslant0.\]
    \end{proposition}

	\subsection{Anosov representations}\label{ss:ano}
	
	In this subsection we define Anosov representations and state some of their properties. The study of Anosov representations was introduced in \cite{Labourie} by Labourie to study Hitchin Representations in $\mathsf{SL}(n,\R)$. Later, in \cite{GW1,GW2} the notion of an Anosov representation was extended to include representations of a word hyperbolic group into any semisimple Lie group. These initial definitions of an Anosov representation were dynamical in nature and closely resembled Axiom A flows appearing in the dynamical systems literature. Characterizations of Anosov representations in terms of more algebraic criteria made their appearance in a series of works by Kapovich--Leeb--Porti \cite{KLP,KLP1,KLP2}. Subsequently, alternate proofs of more dynamical flavour of the Kapovich--Leeb--Porti characterization of Anosov representations were obtained by Bochi--Potrie--Sambarino in \cite{BPS}. In this article we will use the dynamical definition of an Anosov representation given in \cite{GW2}.
	
	In order to define Anosov representations we first need to define the Gromov flow space which plays a central role in the theory of Anosov representations. Suppose $\Gamma$ is a word hyperbolic group and let $\bdry$ be its \emph{boundary}. We consider the diagonal action of $\Gamma$ on
	\[\partial_\infty\Gamma^{(2)} := \partial_\infty\Gamma\times\partial_\infty\Gamma\setminus\{(x,x) \mid x\in \partial_\infty\Gamma\}\]
	coming from the action of $\Gamma$ on $\bdry$. Furthermore, let $\partial_\infty\Gamma^{(2)}\times\mathbb{R}$ be denoted by $\cflow$ and for all $(x,y)\in\partial_\infty\Gamma^{(2)}$ and $s,t\in\mathbb{R}$ let
	\begin{align*}
	\phi_t: \cflow&\rightarrow \cflow\\
	(x,y,s)&\mapsto (x,y,s+t).
	\end{align*}
	\begin{remark}\label{rem:gromovflow}
	In \cite{Gromov} Gromov proved that there exists a proper cocompact action of $\Gamma$ on $\cflow$ which commutes with the the flow $\{\phi_t\}_{t\in\mathbb{R}}$ and which lifts the diagonal action of $\Gamma$ on $\partial_\infty\Gamma^{(2)}$ (For more details see \cite{Champetier}, \cite{Mineyev}). Moreover, there exists a metric $d$ on $\cflow$ well defined only up to H\"older equivalence satisfying the following properties:
	\begin{enumerate}
		\item the $\Gamma$ action on $\cflow$ is isometric,
		\item the flow $\phi_t$ acts by Lipschitz homeomorphisms,
		\item every orbit of the flow $\{\phi_t\}_{t\in\mathbb{R}}$ gives a quasi-isometric embedding.
	\end{enumerate}
	The quotient space $\flow\defeq\Gamma\backslash\cflow$ is called the \textit{Gromov flow space} and by Lemma 2.3 of \cite{GT} $(\flow,d)$ is a compact connected metric space. Moreover, $\flow$ admits a partition of unity (see Section 8.2 of \cite{GT}). We use this flow space to give the dynamical definition of an Anosov representation.
	\end{remark}
	\begin{remark}
		Let $\mathsf{N}$ be a compact negatively curved manifold and suppose $\Gamma$ is the fundamental group of $\mathsf{N}$, then the flow space $\cflow$ can be seen as the unit tangent bundle of $\mathsf{N}$ endowed with the geodesic flow.
	\end{remark}
	
	\begin{definition}\label{def:an}
		Let $\sH$ be a semisimple Lie group, let $\sP_\pm$ be a pair of opposite parabolic subgroups of $\sH$, let $\sP_0\defeq\sP_+\cap\sP_-$ and suppose $\Gamma$ is a word hyperbolic group. Then an injective homomorphism $\rho:\Gamma\rightarrow \sH$ is called Anosov with respect to $\sP_\pm$ if and only if the following holds:
		\begin{enumerate}
			\item there exist continuous, injective, $\rho(\Gamma)$-equivariant maps
			\[\xi^\pm:\bdry\rightarrow\sH/\sP_\pm\]
			such that $\xi(p)\defeq(\xi^-(p_-),\xi^+(p_+))\in\sH/\sP_0\subset\sH/\sP_-\times\sH/\sP_+ $ for any $p=(p_-,p_+,t)\in\cflow$ .
			\item there exist positive real numbers $C,k$ and a continuous collection of Euclidean norms $\|\cdot\|_p$ on $\sT_{\xi(p)}\left(\sH/\sP_0\right)$ for $p\in\cflow$ such that 
			\begin{itemize}
				\item $\|\rho(\gamma)v^\pm\|_{\gamma p}=\|v^\pm\|_{p}$ for all $v^\pm\in\sT_{\xi^\pm(p_\pm)}(\sH/\sP_\pm)$ and $\gamma\in\Gamma$,
				\item $\|v^\pm\|_{\phi_{\pm t}p}\leqslant Ce^{-kt}\|v^\pm\|_p$ for all $v^\pm\in\sT_{\xi^\pm(p_\pm)}\sH/\sP_\pm$ and $t\geqslant 0$.
			\end{itemize}
		\end{enumerate}
		The map $\xi$ is called a limit map of the Anosov representation $\rho$.
	\end{definition}

    A fundamental feature of Anosov representations is that it is an open condition i.e given any $\rho\in\sHom(\Gamma,\sH)$ satisfying Definition \ref{def:an} for some opposite parabolic subgroups $\sP_\pm$, there exists a neighborhood of $\rho$ consisting only of representations satisfying Definition \ref{def:an} for the same $\sP_\pm$ (for more details see Proposition 2.1 of \cite{Labourie} and Theorem 5.13 of \cite{GW2}). Moreover, by Proposition 2.5 of \cite{GW1} Anosov representations admit a unique limit map and by Theorem 6.1 of \cite{BCLS} the limit maps vary analytically with the representation.

    \begin{remark}\label{rem:ttma}
		Suppose $\Gamma$ is a word hyperbolic group which admits an Anosov representation in $\sH$ with respect to $\sP_\pm$. Then using Proposition 4.3 of \cite{GW2} and Propositions 4.2, 5.1 of \cite{BCLS} we obtain that $\flow$ admits a topologically transitive metric Anosov flow $\psi$. The flow $\psi$ is a H\"older reparametrization of the flow $\phi$.
    \end{remark}

\subsection{Margulis spacetimes and the Margulis invariant}
	
In this subsection we state some known results in the theory of Margulis spacetimes which will be important for us to provide a context for the main results of this article and to prove them.

	\begin{convention}\label{notation1}
		Let $I_k$ denote the $k\times k$ identity matrix and let $\langle\mid\rangle$ be the quadratic form on $\R^{2n+1}$ whose corresponding matrix with respect to the standard basis $(e_1,e_2,\dots,e_{2n+1})$ has the form:
		\[
		\cQ\defeq\begin{bmatrix}
		I_{n+1} && 0\\
		0 && -I_{n}
		\end{bmatrix}
		\]
		We denote the connected component of the group of linear transformations preserving $\langle\mid\rangle$ by $\Go$. In order to simplify our notation, we henceforth denote $\Ga$ by $\sG$, $\Go$  by $\sH$, $\R^{2n+1}$ by $\sT$ and we denote their corresponding Lie algebras by $\fg,\fh,\ft$ respectively. We recall that $\sH$ is characterized by the property that it preserves orientation on the spaces spanned respectively by $(e_1,\dots,e_n)$ and $(e_{n+1},\dots,e_{2n+1})$ (for more details see Lemma 6 of \cite{neil}). We call the standard orientation on the aforementioned spaces to be positive. Hence, we can consistently endow any maximal isotropic space (dimension $n$) with positive orientation and any \emph{null space} (space orthogonal to a maximal isotropic space) with positive orientation.
	\end{convention}
	
	Now we define the notion of a Margulis spacetime as follows:
	\begin{definition}
		Suppose $\Gamma$ is a word hyperbolic group and suppose $\rho:\Gamma\rightarrow\sG$ is an injective homomorphism whose Linear part $L_\rho$ is Anosov with respect to $\sP_\pm$. Then $\rho$ is called a \emph{Margulis spacetime} if and only if $\rho(\Gamma)$ acts properly on $\R^{2n+1}$.
	\end{definition}
	\begin{remark}
		Although our definition of a Margulis spacetime includes a continuum of examples of proper affine actions \cite{AMS,GLM,GT}, yet it does not cover all known proper affine actions. Indeed, due to a result of Drumm \cite{Drumm2} the linear part of a proper affine action on $\R^3$ can contain parabolic elements. Also, more general proper affine actions have recently been found by Smilga in \cite{Smilga,Smilga3} and by Danciger--Gu\'eritaud--Kassel in \cite{DGK3}, which are not necessarily covered by our definition. 
	\end{remark}
	
	While showing the existence of proper affine actions of a non-abelian free group on $\R^3$, Margulis introduced an invariant and used it effectively to gauge properness of an affine action. Later, in \cite{AMS} this invariant was extended and defined in the case of $\sG$ to show existence of proper affine actions on $\R^{2n+1}$. We call this invariant the Margulis invariant.
	
	\begin{definition}\label{def:marginv}
	    Let $h\in\sH$ be a \emph{pseudo-hyperbolic} element (i.e. the unit eigenspace of $h$ is one dimensional and $h$ does not have $-1$ as an eigenvalue) and let
	    \[V^h_\pm:=\{v\mid \lim_{k\to\infty}h^{\mp k}v=0\}\subset \R^{2n+1}.\]
	    We note that $V^h_\pm$ are maximal isotropic subspaces and let $v^h_0$ be the unique unit eigenvector of $h$ which is positively oriented with respect to the positive orientations on $V^h_+$ and its orthogonal. Then for any $g=(h,u)\in\sG=\sH\ltimes\sT$ with $h$ pseudo-hyperbolic, the \emph{Margulis invariant} of $g$ is defined as follows:
	    \[\alpha(g):=\langle u\mid v^h_0\rangle .\]
	\end{definition}
	\begin{remark}\label{rem:marginv}
		We recall the following facts:
		\begin{enumerate}
			\item for all pseudo-hyperbolic element $h\in\sH$ we have $v_0^{hh_0h^{-1}}=hv_0^h$,
			\item for all pseudo-hyperbolic element $h\in\sH$ and $u,v\in\sT$ we have \[\alpha(h,u+v)=\alpha(h,u)+\alpha(h,v),\]
			\item for all $g_0\in\sG$ whose linear part is pseudo-hyperbolic and for all $g\in\sG$ we have 
			\begin{align*}
		        \alpha(gg_0g^{-1})&=\left\langle u + hu_0-hh_0h^{-1}u\mid v_0^{hh_0h^{-1}}\right\rangle\\
		        &=\left\langle u + hu_0-hh_0h^{-1}u\mid hv_0^{h_0}\right\rangle\\
		        &=\left\langle h^{-1}u + u_0-h_0h^{-1}u\mid v_0^{h_0}\right\rangle\\
		        &=\left\langle u_0\mid v_0^{h_0}\right\rangle=\alpha(g_0).
		    \end{align*}
		\end{enumerate}
	\end{remark}
	In the following we give a reformulation of the Margulis invariant, which is in tune with the notion of an Anosov representation.
	\begin{convention}\label{notation2}
        We fix the following two maximal isotropic subspaces:
		\[V_\pm\defeq\mathsf{Span}(\{e_i\pm e_{n+1+i}\mid i=1,\dots,n\}),\]
		and note that their orthogonals are the following null subspaces:
		\[(V_\pm)^\perp\defeq\mathsf{Span}(\{e_i\pm e_{n+1+i}\mid i=1,\dots,n\}\cup\{e_{n+1}\}),\]
		which are transverse to each other. There exist exactly two vectors of unit norm in $(V_+)^\perp \cap (V_-)^\perp$. We denote the one which is positively oriented with respect to the positive orientations on $V_+$ and its orthogonal by $v_0$. Furthermore, suppose $\sP_\pm$ respectively are the stabilizers of $(V_\pm)^\perp$ in $\sH$ with $\sP_0\defeq\sP_+\cap\sP_-$ and suppose $\sQ_\pm$ respectively are the stabilizers of $(V_\pm)^\perp$ in $\sG$ with $\sQ_0\defeq\sQ_+\cap\sQ_-$.
	\end{convention}
	Suppose $V,W$ are two transverse null spaces. We can choose $u\in V\cap W$, a basis $(v_1,\dots,v_n)$ of $V^\perp$ and for $j\in\{1,\dots,n\}$ we can choose $w_j\in W$ such that $\langle w_j\mid v_j\rangle=1$. We note that a sign change in $v_j$ results in a sign change in $w_j$ too. Then up to a sign change in $u$ and $v_1$, we obtain that there exists $h\in\sH$ such that $he_{n+1}=u$, $h(e_j+ e_{n+1+j})=v_j$ and $h(e_j- e_{n+1+j})=w_j$ for all $j\in\{1,\dots,n\}$. In particular, $h(V_+)^\perp=V$ and $h(V_-)^\perp=W$, i.e. the diagonal action of $\sH$ on the space of tuples of transverse null spaces is transitive. Hence, we get an identification of $\sH/\sP_0$ as a subspace of $(\sH/\sP_-)\times(\sH/\sP_+)$.
	
	\begin{definition}\label{def:nu}
		Suppose  $\sH$, $\sP_0$ and $v_0$ are as in Convention \ref{notation2}. We define the \emph{neutral map} as
		\begin{align*}
		\nu: \sH/\sP_0&\longrightarrow\R^{2n+1}\\
		g\sP_0&\longmapsto gv_0,
		\end{align*}
		and note that this is well defined due to Lemma 4.1 of \cite{GT}. 
	\end{definition}
	\begin{remark}
	    Suppose $\rho:\Gamma\rightarrow\sG$ is an injective homomorphism whose linear part $L_\rho$ is Anosov with respect to $\sP_\pm$. Suppose $\xi_{L_\rho}:\cflow\rightarrow\sH/\sP_0$ is the limit map of $L_\rho$ and suppose $\nu$ is the neutral map. We define $\nu_\rho\defeq\nu\circ\xi_{L_\rho}$ and observe that $\nu_\rho$ is invariant under the flow $\phi$. Hence, by abuse of notation we get a map \[\nu_\rho:\bdry^{(2)}\rightarrow\R^{2n+1}.\] 
	    We recall that $\gamma_\pm\in\bdry$ are respectively the attracting and repelling fixed points of the action of $\gamma$ on $\bdry$ and observe that 
	    \[\left(\xi^\pm_{L_\rho}(\gamma_\pm)\right)^\perp=\left\{v\mid \lim_{k\to\infty}L_\rho(\gamma^{\mp k})v=0\right\}.\]
	    Moreover, $\nu_\rho(\gamma_-,\gamma_+)$ is the unique eigenvector with unit norm and eigenvalue $1$ such that $\nu_\rho(\gamma_-,\gamma_+)$ is positively oriented with respect to the positive orientations on $(\xi^+_{L_\rho}(\gamma_+))^\perp$ and $\xi^+_{L_\rho}(\gamma_+)$. Hence, 
	    \[\nu_\rho(\gamma_-,\gamma^+) = v^{L_\rho(\gamma)}_0 \text{ and } \left(\xi^\pm_{L_\rho}(\gamma_\pm)\right)^\perp=V^{L_\rho(\gamma)}_\pm .\]
	    It follows that the Margulis invariant of $\rho(\gamma)$ for any $\gamma\in\Gamma\setminus\{e\}$ is 
	    \[\alpha(\rho(\gamma))=\langle u_\rho(\gamma)\mid\nu_\rho(\gamma_-,\gamma_+)\rangle .\]
	\end{remark}
	
	In Section \ref{sec:ir} we will show that the Margulis invariants are algebraic and using this fact, we will show that the Margulis invariant spectrum is infinitesimally rigid. 
	
	\begin{remark}
		Henceforth, we will only consider Margulis spacetimes whose linear parts are Zariski dense in $\sH$. The Zariski density assumption will play a crucial role in the proof of infinitesimal rigidity. The existence of such Margulis spacetimes are guaranteed by \cite{AMS}.
	\end{remark}

	\subsection{The moduli space and the Labourie--Margulis invariant}
	
	In this subsection we will define the moduli space of Margulis spacetimes and state some of its properties. 
	\begin{remark}\label{rem:repvar}
		Suppose $\Gamma$ is a hyperbolic group, let $\sHom_{\sA}(\Gamma,\sH)$ be the space of all injective homomorphisms of $\Gamma$ into $\sH$ which are Zariski dense, smooth points of the representation variety and Anosov with respect to $\sP_\pm$. We also recall that $\sT=\R^{2n+1}$. Moreover, for $\varrho\in\sHom_{\sA}(\Gamma,\sH)$ let us consider the following spaces:
		\begin{enumerate}
			\item the space of $\varrho$-\emph{cocycles}
			\[\mathsf{Z}^1_{\varrho}(\Gamma,\sT):=\{u:\Gamma\to\sT\mid u(\gamma\eta)=\varrho(\gamma)u(\eta)+u(\gamma)\},\]
			\item the space of $\varrho$-\emph{coboundaries}
			\[\mathsf{B}^1_{\varrho}(\Gamma,\sT):=\{v-\varrho(\cdot)v\mid v\in\sT\}\subset\mathsf{Z}^1_{\varrho}(\Gamma,\sT),\]
			\item the space of group cohomology
			\[\mathsf{H}^1_{\varrho}(\Gamma,\sT):=\mathsf{Z}^1_{\varrho}(\Gamma,\sT)/\mathsf{B}^1_{\varrho}(\Gamma,\sT).\]
		\end{enumerate}
		Let $\sHom_{\sA}(\Gamma,\sG)\subset\sHom(\Gamma,\sG)$ be the space of all representations $\rho$ whose linear parts $L_\rho\in\sHom_{\sA}(\Gamma,\sH)$. We observe that $\sHom_{\sA}(\Gamma,\sG)$ is a vector bundle over $\sHom_{\sA}(\Gamma,\sH)$ and the fiber over $\varrho\in\sHom_{\sA}(\Gamma,\sH)$ is the vector space $\mathsf{Z}^1_{\varrho}(\Gamma,\sT)$. As $\sHom_{\sA}(\Gamma,\sH)$ is open inside $\sHom(\Gamma,\sH)$, we obtain that $\sHom_{\sA}(\Gamma,\sG)$ is open inside $\sHom(\Gamma,\sG)$. We consider the conjugation action of $\sG$ on $\sHom_{\sA}(\Gamma,\sG)$ and observe that it induces a conjugation action of $\sH$ on $\sHom_{\sA}(\Gamma,\sH)$. We define \[\sA(\Gamma,\sG)\defeq\sHom_{\sA}(\Gamma,\sG)/\sG \text{ and } \sA(\Gamma,\sH)\defeq\sHom_{\sA}(\Gamma,\sH)/\sH.\] 
		It follows that $\sA(\Gamma,\sG)$ again fibers over $\sA(\Gamma,\sH)$ and the fiber of $\sA(\Gamma,\sG)$ over $[\varrho]\in\sA(\Gamma,\sH)$ can be identified with $\mathsf{H}^1_{\varrho}(\Gamma,\sT)$.
		
		Moreover, let $\sHom_{\sM}(\Gamma,\sG)\subset\sHom_{\sA}(\Gamma,\sG)$ be the space of all Margulis spacetimes. Suppose $\rho\in\sHom_{\sA}(\Gamma,\sG)$ and $g\in\sG$. We observe that $\rho(\Gamma)$ acts properly on $\sT$ if and only if $g\rho(\Gamma) g^{-1}$ acts properly on $\sT$ i.e. $\sHom_{\sM}(\Gamma,\sG)$ is invariant under the conjugation action of $\sG$. We define \[\sM(\Gamma,\sG)\defeq\sHom_{\sM}(\Gamma,\sG)/\sG.\] 
	\end{remark}
	
	 Now we will give an alternative characterization of $\sHom_{\sM}(\Gamma,\sG)$ to study its properties. In order to do that we will use the notion of a Labourie--Margulis invariant. This invariant is a Liv\v sic cohomology class of H\"older continuous functions.

	\begin{definition}
		Suppose $\rho\in\sHom_{\sA}(\Gamma,\sG)$, suppose $\phi$ is the flow on $\flow$ as mentioned in Subsection \ref{ss:ano} and let $t(\gamma)$ be the period of the periodic orbit of $\phi$ in $\flow$ corresponding to $\gamma\in\Gamma$. Then the Liv\v sic cohomology class $[f_\rho]$ of a H\"older continuous function $f_\rho:\flow\rightarrow\R$ is called the \emph{Labourie--Margulis invariant} if and only if for all non-torsion element $\gamma\in\Gamma$ the following holds:
		\[\int f_\rho d\mu_\gamma =\frac{\alpha_\rho(\gamma)}{t(\gamma)},\]
		where $\mu_\gamma$ is the $\phi$-invariant probability measure supported on the periodic orbit corresponding to $\gamma$.
	\end{definition}
	\begin{remark}
		We note that the existence of the Labourie--Margulis invariant is guaranteed by Lemma 7.2 of Ghosh--Treib \cite{GT} and its uniqueness is guaranteed by Liv\v sic's Theorem \cite{Liv}.
	\end{remark}
	Goldman--Labourie--Margulis \cite{GLM} used this invariant and showed the following result:
	\begin{theorem}
		Suppose $\Gamma$ is a free group and suppose $\rho:\Gamma\rightarrow\sG$ is an injective homomorphism such that $L_\rho$ is Fuchsian. Then $\rho$ gives rise to a Margulis spacetime if and only if its Labourie--Margulis invariant is either positive or negative.
	\end{theorem}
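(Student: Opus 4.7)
The plan is to establish both implications separately, relying on Margulis' opposite sign lemma for the forward direction and on a direct properness argument combined with thermodynamic formalism for the converse.

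\textbf{Forward direction (Margulis spacetime implies sign-definiteness of $[f_\rho]$).} I would begin by invoking Margulis' opposite sign lemma: for any proper affine action with $\sP_\pm$-Anosov linear part, the Margulis invariants $\alpha_\rho(\gamma)$ all share a common sign. Because $\ell(\gamma)>0$, the periodic integrals $\int_\gamma f_\rho=\alpha_\rho(\gamma)/\ell(\gamma)$ inherit that same common sign, say positive. Since $L_\rho$ is Fuchsian, the Gromov flow space $\flow$ is Hölder conjugate to the geodesic flow on the convex cocompact hyperbolic surface $L_\rho(\Gamma)\backslash\mathbb{H}^2$, which is Anosov. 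I would then apply the positive Livšic theorem for Anosov flows, which upgrades a Hölder function with uniformly positive periodic integrals to one that is Livšic cohomologous to a strictly positive Hölder function on $\flow$. This yields the positivity of $[f_\rho]$.

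\textbf{Converse direction (sign-definiteness of $[f_\rho]$ implies properness).} Assume $[f_\rho]=[g]$ with $g$ Hölder continuous and strictly positive on the compact space $\flow$, and set $m\defeq\min_{\flow}g>0$. Integrating $g$ around a periodic orbit gives $\alpha_\rho(\gamma)\geqslant m\,\ell(\gamma)$; since orbits of the flow are quasi-isometric embeddings of $\R$, the period $\ell(\gamma)$ grows linearly in the word length of $\gamma$. To promote this linear lower bound on the Margulis invariants to properness on $\R^{2n+1}$, I would decompose the translation part of $\rho(\gamma)$ with respect to the stable, neutral, and unstable eigendirections of $L_\rho(\gamma)$: the neutral component is, by the very definition of the Margulis invariant, essentially $\alpha_\rho(\gamma)$ itself, while the stable and unstable components are controlled after conjugating the action so that the attracting/repelling fixed points of $L_\rho(\gamma)$ lie in a compact region of the limit set. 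The linear growth of $\alpha_\rho(\gamma)$ then forces $\rho(\gamma)\cdot x$ to escape every compact set, ensuring proper discontinuity.

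The step I expect to be the main obstacle is the converse: Margulis invariants are not additive under multiplication in $\Gamma$, so one cannot simply add pointwise lower bounds on translation parts. The remedy is to exploit the coboundary identity $f_\rho-g=\left.\partial_s\right|_{s=0}h\circ\phi_s$ and the uniform boundedness of $h$ on $\flow$, which allows the cumulative translation error between $f_\rho$ and $g$ along long flow arcs to be controlled, while the Anosov contraction/expansion on the transverse directions absorbs the non-neutral components of the translation parts. The Fuchsian hypothesis on $L_\rho$ is crucial here since it provides a uniformly hyperbolic geodesic flow model on which the classical Livšic and ergodic estimates from thermodynamic formalism apply without higher rank complications.
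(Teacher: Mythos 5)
The paper does not prove this theorem: it is stated as an external result and attributed directly to Goldman--Labourie--Margulis, so there is no in-paper argument to compare against. Evaluated on its own terms, your sketch has two genuine gaps.

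In the forward direction you pass from Margulis' opposite sign lemma, which yields only that the $\alpha_\rho(\gamma)$ share a common sign, to the claim that the periodic averages $\int_\gamma f_\rho = \alpha_\rho(\gamma)/\ell(\gamma)$ are uniformly bounded below by a positive constant, and then apply a positive Liv\v sic theorem. But sign agreement does not give uniformity: in general, strict positivity of all periodic integrals does not imply cohomological positivity, since one can have $\int_\gamma f > 0$ for every closed orbit while $\int_\gamma f / \ell(\gamma) \to 0$ along a sequence of orbits, in which case no representative of the class $[f]$ is everywhere positive. Proving that properness forces the uniform lower bound is precisely the hard content of the forward implication in Goldman--Labourie--Margulis; it is not a formal consequence of the opposite sign lemma.

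In the converse direction, the outline --- decompose the translation part into stable, neutral, and unstable components, argue that the neutral part grows linearly, and claim that the hyperbolicity ``absorbs'' the transverse parts --- is a restatement of the goal rather than an argument. The actual proof works at the level of invariant measures rather than individual orbits: one shows that if $[f_\rho]$ is positive then the integral of $f_\rho$ against every $\psi$-invariant Borel probability measure on $\flow$ (the diffused Margulis invariant) is bounded away from zero, and derives properness from this measure-theoretic nonvanishing. The passage through the full simplex of invariant measures is exactly how one circumvents the non-additivity of Margulis invariants that you correctly identify as the obstacle; it cannot be replaced by tracking translation vectors along individual periodic orbits and controlling their stable/unstable components.
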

	This result was later generalized by Ghosh--Treib \cite{GT} for general word hyperbolic groups $\Gamma$ and $\rho\in\sHom_{\sA}(\Gamma,\sG)$.
	\begin{theorem}\label{thm:affano}
		Suppose $\Gamma$ is a word hyperbolic group and suppose $\rho:\Gamma\rightarrow\sG$ is an injective homomorphism such that $L_\rho\in\sHom_{\sA}(\Gamma,\sH)$. Then $\rho$ gives rise to a Margulis spacetime if and only if its Labourie--Margulis invariant is either positive or negative.
	\end{theorem}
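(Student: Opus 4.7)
The plan is to prove the equivalence by estimating, in a quantitative way, the obstruction to properness in terms of the neutral component of translation parts, then matching that component to the Livšic cocycle $f_\rho$.

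First I would set up the key geometric input. Since $L_\rho$ is $\sP_\pm$-Anosov, for every $\gamma\in\Gamma$ there is a canonical $L_\rho(\gamma)$-invariant decomposition of $\R^{2n+1}$ into the two maximal isotropic planes at $\xi^{\pm}_{L_\rho}(\gamma_\pm)$ together with the one-dimensional neutral line $\R\,\nu_\rho(\gamma_-,\gamma_+)$. The Anosov contraction/expansion estimates in Definition \ref{def:an} give, uniformly for $\gamma$ long enough, that $L_\rho(\gamma)-I$ is uniformly invertible on the two isotropic planes with norm bounds independent of $\gamma$, whereas on the neutral line it vanishes. Hence one can solve, up to a bounded error, the equation $(L_\rho(\gamma)-I)x+u_\rho(\gamma)\in\R\,\nu_\rho(\gamma_-,\gamma_+)$, and the neutral component of the translation after this centering is exactly $\alpha_\rho(\gamma)\nu_\rho(\gamma_-,\gamma_+)$. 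This reduces the properness question to controlling $\alpha_\rho(\gamma_n)$ along sequences $\gamma_n\to\infty$ in $\Gamma$.

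Next I would prove the easy direction ($[f_\rho]$ definite implies properness). Suppose $[f_\rho]$ is positive; choose a H\"older representative $g_\rho\geq c>0$ in the same Livšic class. For any $\gamma$ with period $\ell(\gamma)$ in $\flow$, one has $\alpha_\rho(\gamma)=\int_\gamma g_\rho\geq c\,\ell(\gamma)$. Moreover the same bound passes from periodic orbits to all elements via the orbit-counting/closing lemma for the Gromov flow, giving $\alpha_\rho(\gamma)\to+\infty$ as $|\gamma|\to\infty$. Combined with the Anosov decomposition of the previous paragraph and the fact that the non-neutral components of $u_\rho(\gamma)$ can be absorbed by a bounded translation, any putative sequence $\rho(\gamma_n)K\cap K\neq\emptyset$ with $K$ compact would force $\alpha_\rho(\gamma_n)$ to stay bounded, contradicting the lower bound. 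Hence $\rho(\Gamma)$ acts properly and $\rho\in\sHom_{\sM}(\Gamma,\sG)$.

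For the harder direction (properness implies $[f_\rho]$ has definite sign), I would argue by contrapositive in two layers. First, upgrade the question from the Livšic class $[f_\rho]$ to periodic orbits: by the Livšic theorem, $[f_\rho]$ fails to be positive or negative exactly when there exist periodic orbits (equivalently conjugacy classes $\gamma_+,\gamma_-\in\Gamma$) with $\alpha_\rho(\gamma_+)>0$ and $\alpha_\rho(\gamma_-)<0$. Second, I would run a Margulis-type opposite-sign argument: using the transversality of the attracting/repelling flags of $\gamma_+^k$ and $\gamma_-^k$ (which holds after replacing by suitable powers and conjugates, thanks to the Anosov property and a ping-pong construction in $\sH/\sP_\pm$), form words $w_n=\gamma_+^{a_n}\gamma_-^{b_n}$ whose linear parts remain uniformly contracting on complementary flags and whose Margulis invariants $\alpha_\rho(w_n)$ cancel to stay in a bounded interval. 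Because the neutral component is the only obstruction to solving the centering equation, this produces elements $\rho(w_n)$ displacing a fixed compact set by a bounded amount while $w_n\to\infty$, contradicting properness.

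The main obstacle is the second layer of the converse: the ping-pong/opposite-sign construction has to be carried out in the higher-rank setting of $\sH=\Go$ where the unstable/stable directions are $(n+1)$-dimensional isotropic planes rather than lines, so one must choose the powers $a_n,b_n$ so that not only the neutral components cancel but the non-neutral components of $u_\rho(w_n)$ remain bounded uniformly in the Anosov norms $\|\cdot\|_p$. This is where the uniform openness of the transversality domain for the limit maps $\xi^\pm_{L_\rho}$ and the uniform Anosov contraction constants $C,k$ of Definition \ref{def:an} are used; the rest is bookkeeping with the Cartan projection and the explicit formula for $\alpha_\rho$ in Definition \ref{def:marginv}.
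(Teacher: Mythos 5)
The paper does not prove Theorem \ref{thm:affano}; it cites it from \cite{GT} (generalizing \cite{GLM}), so there is no internal proof to compare against. Your proposal therefore has to stand on its own.

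The outline for the ``properness implies definite sign'' direction (your second layer) is the classical Margulis opposite-sign argument and is the right shape; the bookkeeping with uniform Anosov constants and transversality of the limit flags is real work but plausible. However, you have the difficulty assessment reversed: the direction you call ``easy,'' definite Labourie--Margulis invariant implies properness, is the one that was open for two decades after Margulis's conjecture, and it is precisely there that your sketch has a concrete gap. You assert that the bound $\alpha_\rho(\gamma)\geq c\,\ell(\gamma)$ on periodic orbits ``passes from periodic orbits to all elements via the orbit-counting/closing lemma \ldots\ giving $\alpha_\rho(\gamma)\to\infty$ as $|\gamma|\to\infty$.'' This cannot be right: $\alpha_\rho$ is a class function, invariant under conjugation, so every conjugacy class contains elements of arbitrarily large word length with a fixed value of $\alpha_\rho$. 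There is no statement of the form $\alpha_\rho(\gamma)\to\infty$ as $|\gamma|\to\infty$.

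The downstream consequence is that your proposed contradiction is incomplete. The computation that $\rho(\gamma)K\cap K\neq\emptyset$ forces $|\alpha_\rho(\gamma)|\leq \operatorname{diam}(K)$ is correct (using $L_\rho(\gamma)\nu=\nu$ and $\langle\,|\,\rangle$-invariance), and combined with $\alpha_\rho(\gamma)\geq c\,\ell(\gamma)$ it shows $\ell(\gamma)$ is bounded. But bounded $\ell(\gamma)$ only confines $\gamma$ to finitely many \emph{conjugacy classes}, each of which is infinite, so this does not prove that only finitely many $\gamma$ return $K$ to itself. The Margulis invariant, being a class function, is blind to how far the axis of $\gamma$ is from $K$, and a fixed conjugacy class can move a growing sequence of compacts by a bounded amount. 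The actual proof in \cite{GT} (and \cite{GLM} in the Fuchsian case) closes this gap by building a uniformly proper \emph{cocycle} from the H\"older Labourie--Margulis function on the flow space (an affine Anosov / proper-cocycle structure), so that the affine displacement, which is genuinely not a class function, is controlled for all group elements, not merely up to conjugacy. That step is the technical heart of the theorem and is missing from your proposal.
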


	\subsection{Regularity of Labourie--Margulis invariants}
	
	In this subsection we show that the Labourie-Margulis invariant of a Margulis spacetime varies analytically over the moduli space. Later, we will use this fact to define a quadratic form on the moduli space.
	
	Suppose $\Gamma$ is a word hyperbolic group, $\flow$ is the Gromov flow space and $\psi$ is as in Remark \ref{rem:ttma}. Suppose $\pi:\cflow\rightarrow\flow$ is the natural projection map and
	\[\sigma:\sHom_{\sA}(\Gamma,\sG)\times\cflow\rightarrow\R^{2n+1}\] 
	is a map which is smooth along flow lines of $\{\psi_s\}_{s\in\R}$. We denote 
	\[(\nabla_\psi \sigma)(\rho,p)\defeq\left.\frac{d}{ds}\right|_{s=0}\sigma(\rho,\psi_s p).\]
	\begin{lemma}\label{lem:analytic}
		Suppose $\Gamma$ is a word hyperbolic group. Then there exists
		\[\sigma:\sHom_{\sA}(\Gamma,\sG)\times\cflow\rightarrow\R^{2n+1}\] 
		such that the following holds:
		\begin{enumerate}
			\item $\sigma$ is smooth along the flow lines of $\{\psi_s\}_{s\in\R}$,
			\item $\sigma$ and $\nabla_\psi \sigma$ are H\"older continuous in the variable $p\in\cflow$,
			\item $\sigma$ and $\nabla_\psi \sigma$ are analytic along the variable $\rho$,
			\item $\sigma$ and $\nabla_\psi \sigma$ are equivariant i.e. for all $\gamma\in\Gamma$ and $p\in\cflow$,
			\[\sigma(\rho,\gamma p)=\rho(\gamma) \sigma(\rho,p),\]
			\[\nabla_\psi\sigma(\rho,\gamma p)=L_\rho(\gamma) \nabla_\psi\sigma(\rho,p).\]
		\end{enumerate}
	\end{lemma}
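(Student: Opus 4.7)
The idea is to build $\sigma$ by an equivariant averaging procedure driven by a partition of unity on the Gromov flow space. Because $\Gamma$ acts properly cocompactly on $\cflow$, and $\flow$ admits partitions of unity with the regularity recalled in Subsection 1.1 (Section 8.2 of \cite{GT}), the plan starts by fixing a compactly supported function $\chi:\cflow\rightarrow[0,1]$, H\"older continuous in $p$ and smooth along the flow lines of $\psi$, such that $\sum_{\gamma\in\Gamma}\chi(\gamma p)=1$ for all $p\in\cflow$. By properness only finitely many $\gamma$ contribute at each $p$.

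The candidate is then
\[\sigma(\rho,p)\defeq-\sum_{\gamma\in\Gamma}\chi(\gamma p)\,L_\rho(\gamma)^{-1}u_\rho(\gamma),\]
which may be read as a weighted average of the preimages $\rho(\gamma)^{-1}(0)$ of the origin under the affine action of $\rho$. Since at each $p$ the sum is finite, and each summand $L_\rho(\gamma)^{-1}u_\rho(\gamma)$ depends analytically on $\rho$ (the linear part lies in $\sH\subset\mathsf{GL}(2n+1,\R)$, on which inversion is analytic), properties (1), (2), (3) for $\sigma$ and for $\nabla_\psi\sigma$ reduce to the corresponding regularity of $\chi$ and its flow derivative, which is exactly what was arranged in the choice of $\chi$.

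The main calculation is equivariance (4). Reindexing $\gamma=\gamma'\eta^{-1}$ in $\sigma(\rho,\eta p)$, and combining $L_\rho(\gamma'\eta^{-1})^{-1}=L_\rho(\eta)L_\rho(\gamma')^{-1}$ with the cocycle identity $u_\rho(\gamma'\eta^{-1})=u_\rho(\gamma')+L_\rho(\gamma')u_\rho(\eta^{-1})$ and the relation $u_\rho(\eta)+L_\rho(\eta)u_\rho(\eta^{-1})=0$, the reindexed term becomes $L_\rho(\eta)L_\rho(\gamma')^{-1}u_\rho(\gamma')-u_\rho(\eta)$. Summing against $\chi(\gamma' p)$ and using $\sum_{\gamma'}\chi(\gamma' p)=1$ gives
\[\sigma(\rho,\eta p)=L_\rho(\eta)\sigma(\rho,p)+u_\rho(\eta)=\rho(\eta)\,\sigma(\rho,p),\]
which is the desired affine equivariance. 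The companion statement for $\nabla_\psi\sigma$ is then obtained by differentiating this identity along $\psi_s$; the $\Gamma$-action on $\cflow$ commutes with the flow, so the translation term $u_\rho(\eta)$ drops out and only the linear factor $L_\rho(\eta)$ survives.

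The only genuinely nontrivial input is the existence of a partition of unity $\chi$ that is simultaneously H\"older continuous transverse to the flow and smooth along it; once that is in place, the verification is a formal algebraic computation. Since \cite{GT} already supplies such a $\chi$, no further obstacle arises, and the approach produces the section $\sigma$ with all four properties.
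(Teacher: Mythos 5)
Your construction is essentially the same as the paper's: the paper sets $\sigma(\rho,p)=\sum_{i}f_i(\pi(p))\,\rho(\gamma_{p,i})^{-1}v$ using a finite partition of unity $\{f_i\}$ on $\flow$ and the unique $\gamma_{p,i}$ with $\gamma_{p,i}p\in V_i$, which for $v=0$ coincides exactly with your $-\sum_{\gamma}\chi(\gamma p)L_\rho(\gamma)^{-1}u_\rho(\gamma)$ once one repackages the $f_i$'s as a single bump function $\chi$ summed over $\Gamma$. Your equivariance computation via the cocycle identity and $\sum_\gamma\chi(\gamma p)=1$ is the same algebraic argument the paper runs, and the regularity properties are verified from the same input ([GT], Section 8.2).
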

	\begin{proof}
		We recall that any point $p\in\cflow$ is of the form $p=(p_-,p_+,t_p)$ for some unique $p_\pm\in\partial_\infty\Gamma$ and $t_p\in\R$. Let $d$ be a $\Gamma$ invariant metric on $\cflow$ such that $(\flow,d)$ is a Gromov flow space. We consider,
		\begin{align*}
		V^-(p,\epsilon)&:=\{q\mid d((q_-,p_+,t_p),p)<\epsilon\},\\
		V^+(p,\epsilon)&:=\{q\mid d((p_-,q_+,t_p),p)<\epsilon\},\\
		V^0(p,\epsilon)&:=\{q\mid d((p_-,p_+,t_q),p)<\epsilon\},
		\end{align*}
		and define $V(p,\epsilon):=V^-(p,\epsilon)\cap V^+(p,\epsilon)\cap V^0(p,\epsilon)$. We call such a neighborhood of $p$ an \textit{open cube}. As $\flow$ is compact, there exist small open cubes $\{V_i\}_{i=1}^k$ such that $\cup_{i=1}^k\pi(V_i)=\flow$. Hence $\cup_{\gamma\in\Gamma}\cup_{i=1}^k\gamma V_i=\cflow$. We know from Section 8.2 of \cite{GT} that there exist maps 
		\[\{f_i:\flow\rightarrow\R^+\}_{i=1}^k\] 
		with $\mathsf{Supp}(f_i)\subset \pi(V_i)$ such that the functions $f_i$ are H\"older continuous and smooth along flow lines with $\sum_{i=1}^kf_i=1$ and their derivatives along flow lines are also H\"older continuous. 
		We use this to construct the map 
		\[\sigma:\sHom_{\sA}(\Gamma,\sG)\times\cflow\rightarrow\R^{2n+1}.\]
		Let us fix $v\in\R^{2n+1}$. We observe that for any $p\in\Gamma V_i$ there exists a unique $\gamma_{p,i}$ such that $\gamma_{p,i}p\in V_i$. Note that in such a situation $\gamma_{\eta p,i}\eta=\gamma_{p,i}$. For $p\in\Gamma V_i$ we define $\sigma_i(\rho,p)\defeq\rho(\gamma_{p,i})^{-1}v$.
		
		Now for any $p\in\cflow$ we define:
		\[\sigma(\rho,p)\defeq\sum_{i=1}^kf_i(\pi(p))\sigma_i(\rho,p).\]
		We check that these maps are equivariant. Indeed, as
		\begin{align*}
		\sigma_\rho(\gamma p)&=\sum_{i=1}^kf_i(\pi(\gamma p))\sigma_{i,\rho}(\gamma p)=\sum_{i=1}^kf_i(\pi(p))\rho(\gamma_{\gamma p,i})^{-1}v\\
		&=\sum_{i=1}^kf_i(\pi(p))\rho(\gamma_{p,i}\gamma^{-1})^{-1}v\\
		&=\sum_{i=1}^kf_i(\pi(p))\rho(\gamma)\rho(\gamma_{p,i})^{-1}v=\sum_{i=1}^kf_i(\pi(p))\rho(\gamma)\sigma_{i,\rho}(p)\\
		&=\rho(\gamma)\sum_{i=1}^kf_i(\pi(p))\sigma_{i,\rho}(p)=\rho(\gamma)\sigma_\rho(p).
		\end{align*}
		As $f_i$ is H\"older continuous and smooth along flow lines for all $i\in\{1,\dots,k\}$, it follows from our construction that the map $\sigma$ is H\"older continuous in the variable $p$ and analytic in the variable $\rho$.
		
		Moreover, we observe that for any $p\in\cflow$ there exists $s_p>0$ such that for all real number $s$ with $|s|\leq s_p$ we have: $p\in\gamma V_i$ if and only if $\psi_sp\in\gamma V_i$. Hence by uniqueness we get $\gamma_{p,i}=\gamma_{\psi_sp,i}$ for all $s$ with $|s|\leq s_p$. We recall that $f_i$ were chosen so that the derivative of $f_i$ along flow lines is also H\"older continuous. Also, the derivative of $f_i$ along flow lines is zero outside $V_i$ for all $i\in\{1,\dots,k\}$. Hence, we obtain
		\begin{align*}
		(\nabla_\psi \sigma)(\rho,p)&=\left.\frac{d}{ds}\right|_{s=0}\sum_{i=1}^kf_i(\pi(\psi_s p))\sigma_{\rho,i}(\psi_s p)\\
		&=\left.\frac{d}{ds}\right|_{s=0}\sum_{i=1}^kf_i(\pi(\psi_s p))\rho(\gamma_{\psi_sp,i})^{-1}v\\
		&=\sum_{i=1}^k\left(\left.\frac{d}{ds}\right|_{s=0}f_i(\pi(\psi_s p))\right)\rho(\gamma_{p,i})^{-1}v,
		\end{align*}
		and it follows that $\nabla_\psi \sigma$ is analytic in the variable $\rho$ and H\"older continuous in the variable $p$. Finally, we observe that:
		\begin{align*}
		(\nabla_\psi \sigma)(\rho, \gamma p)&=\left.\frac{d}{ds}\right|_{s=0}\sigma(\rho,\psi_s \gamma p)=\left.\frac{d}{ds}\right|_{s=0}\sigma(\rho,\gamma \psi_sp)\\
		&=\left.\frac{d}{ds}\right|_{s=0}\rho(\gamma)\sigma(\rho,\psi_sp)=L_\rho(\gamma)\left.\frac{d}{ds}\right|_{s=0}\sigma(\rho,\psi_sp)\\
		&=L_\rho(\gamma)(\nabla_\psi \sigma)(\rho, p)
		\end{align*}
		and our result follows.
	\end{proof}
	
	We recall that $\cO$ denote the set of periodic orbits of $\psi$ inside $\flow$. Furthermore, we denote the period of the periodic orbit of $\psi$ inside $\flow$ corresponding to $\gamma$ by $\ell(\gamma)$. We establish the following  Corollary which shows that the Labourie-Margulis invariant defined in \cite{GT} varies analytically with respect to the representation.

	\begin{corollary}\label{cor:normalizedalpha}
		Suppose $\rho\in\sHom_{\sA}(\Gamma,\sG)$. Then there exists a H\"older continuous function $f_\rho:\flow\rightarrow\R$ such that $f_\rho$ varies analytically in the variable $\rho$ and satisfies the following equation for all $\gamma\in\cO$:
		\[\int_\gamma f_\rho=\frac{\alpha_\rho(\gamma)}{\ell(\gamma)}.\]		
	\end{corollary}
	\begin{proof}
		Suppose $\sigma_\rho$ is as in Lemma \ref{lem:analytic} and let $f_\rho:\flow\to\R$ be such that
		\[f_\rho(p)\defeq\langle\nabla_\psi\sigma_\rho(p)\mid\nu_\rho(p)\rangle\] 
		for all $p\in\flow$. We fix a point $p_\gamma\defeq(\gamma_-,\gamma_+,0)\in\cflow$ and observe that $\psi_{\ell(\gamma)}p_\gamma=\gamma p_\gamma$. Moreover, we have $\sigma_\rho(\gamma p_\gamma)=\rho(\gamma)\sigma_\rho(p_\gamma)$ and 
		\[\sigma_\rho(\psi_{\ell(\gamma)}p_\gamma)-\sigma_\rho(p_\gamma)=\int_{0}^{\ell(\gamma)}d\sigma_\rho(\psi_tp_\gamma)=\int_{0}^{\ell(\gamma)}\nabla_\psi\sigma_\rho(\psi_tp_\gamma)dt.\] 
		Hence, we compute and deduce that
		\begin{align*}
		\alpha_\rho(\gamma)&=\langle u_{\rho}(\gamma)\mid\nu_\rho(\gamma_-,\gamma_+)\rangle=\langle\rho(\gamma)\sigma_\rho(p_\gamma)-\sigma_\rho(p_\gamma)\mid\nu_\rho(\gamma_-,\gamma_+)\rangle\\
		&=\int_{0}^{\ell(\gamma)}\langle\nabla_\psi\sigma_\rho(\psi_tp_\gamma)\mid\nu_\rho(\gamma_-,\gamma_+)\rangle dt=\ell(\gamma)\int_\gamma f_\rho.
		\end{align*}
		Finally, we use Theorem 6.1 of \cite{BCLS} and Lemma \ref{lem:analytic} to conclude that $f_\rho:\flow\rightarrow\R$ is a H\"older continuous function which varies analytically in the variable $\rho$.
	\end{proof}
	
	\begin{lemma}\label{lem:LMadd}
	     Suppose $\varrho\in\sHom_{\sA}(\Gamma,\sH)$ and $u,v\in\mathsf{Z}^1_\varrho(\Gamma,\sT)$. Then for any $a,b\in\mathbb{R}$ the following holds: \[[f_{(\varrho,au+bv)}]=a[f_{(\varrho,u)}]+b[f_{(\varrho,v)}].\]
	\end{lemma}
	\begin{proof}
	    The result follows from Remark \ref{rem:marginv} (2) and Theorem \ref{thm:liv}.
	\end{proof}
	
	\begin{remark}
	    Suppose $\varrho\in\sHom_{\sA}(\Gamma,\sH)$ and $u,v\in\mathsf{Z}^1_\varrho(\Gamma,\sT)$ are such that $[f_{(\varrho,u)}]=[f_{(\varrho,v)}]$. Then  $\alpha_{(\varrho,u)}(\gamma)=\alpha_{(\varrho,v)}(\gamma)$ for all $\gamma\in\Gamma$. Now using results obtained by Drumm--Goldman \cite{DG}, Charette--Drumm \cite{CD} and Kim \cite{Kim} about marked Margulis invariant spectrum rigidity, we obtain that $(\varrho,u)$ and $(\varrho,v)$ are conjugates of each other.
	\end{remark}
	
	\begin{proposition}\label{prop:convcone}
		Suppose $\varrho\in\sHom_{\sA}(\Gamma,\sH)$ is such that $\sH^1_\varrho(\Gamma,\sT)$ admits Margulis spacetimes. Then the space of Margulis spacetimes in $\sH^1_\varrho(\Gamma,\sT)$ have two connected components which are open convex cones. These components differ by a sign change and the intersection of their closures consists of representations whose Margulis invariant spectrum is identically zero.
	\end{proposition}
	\begin{proof}
		We observe that the space of positive H\"older functions is an open cone. Hence, our result follows using Lemmas \ref{lem:analytic}, \ref{lem:LMadd} and Theorem \ref{thm:affano}.
	\end{proof}

	\section{Infinitesimal rigidity}\label{sec:ir}
	
	In \cite{DG} Drumm--Goldman showed that any two Margulis spacetimes of dimension three have distinct marked Margulis invariant spectrum. Later, this result was generalized for higher dimensions by Kim in \cite{Kim}. In this section we prove an infinitesimal version of their result. We show that Margulis spacetimes are infinitesimally determined by their marked Margulis invariant spectra. For three dimensional Margulis spacetimes this result follows from \cite{Ghosh1}. The techniques used in \cite{Ghosh1} to prove this result are not suitable for higher dimensions. Hence, in this article we use a Margulis type argument to achieve our goal. We show that the square of the Margulis invariant of an element is an algebraic function of that element and use it carefully to conclude our result.

	\subsection{Algebraicity of Margulis invariants}
	
	In this subsection we show that the Margulis invariant of an element depends algebraically on the representation. This algebraic dependence will be used in the subsequent sections to prove infinitesimal rigidity of Margulis spacetimes.
	
    Suppose $\mathfrak{gl}(2n+1,\R)$ is the space of all $(2n+1)\times(2n+1)$ matrices, let $H\in\mathfrak{gl}(2n+1,\R)$ and let $\chi_H(t)$ denote the \emph{characteristic polynomial} of $H$, i.e.
    \[\chi_H(t):=\det(tI-H).\]
    We observe that the degree of the characteristic polynomial is $2n+1$ and the coefficients of the characteristic polynomial are $c_{2n+1}(H)=1$, $c_{2n}(H)=-\tr(H)$ and $c_{2n+1-k}(H)=(-1)^k\tr(\wedge^k H)$ for $k\geqslant 2$, i.e.
    \[\chi_H(t)=\sum_{k=0}^{2n+1}c_k(H)t^k.\]
   In particular, $c_0(H)=(-1)^{2n+1}\tr(\wedge^{2n+1} H)=-\det(H)$. Hence, the coefficients of $\chi_H(t)$ are symmetric polynomials obtained from the entries of $H$. Moreover, by the Cayley-Hamilton theorem (proved by Frobenius in \cite{FCH}) we know that
    \[\chi_H(H)=0.\]
    \begin{definition}\label{def:adjugate}
        Suppose  $H\in\mathfrak{gl}(2n+1,\R)$. Then the \emph{adjugate} of $H$ is defined as follows:
        \[\adj(H):=\sum_{k=1}^{2n+1}c_k(H)H^{k-1}.\]
    \end{definition}
    \begin{remark}
    Note that $H\adj(H)=\adj(H)H=\chi_H(H)-c_0(H)=\det(H)I$.
    \end{remark}
	
	\begin{lemma}\label{lem:alg}
		The square of the map $\alpha$ can be written as a rational function in $g=(h,u)$ as follows:
		\[\alpha(g)^2=\frac{\langle[\adj(I-h)]u\mid u\rangle}{\tr[\adj(I-h)]}.\]
	\end{lemma}
	\begin{proof}
	    As $h$ is pseudo-hyperbolic, $0$ is an eigenvalue of $(I-h)$ with no multiplicity. In particular, we obtain $\det(I-h)=0$ and hence
	    \[\adj(I-h)=h\adj(I-h)=\adj(I-h)h.\]
	    It follows that for any $v\in\R^{2n+1}$, $\adj(I-h)v$ is an eigenvector of $h$ with eigenvalue $1$, i.e. $\adj(I-h)v$ is a multiple of $v_0^h$ for all $v$. 
	    Moreover, for all $w\in V_\pm^h$ we have
	    \[\adj(I-h)w=\lim_{k\to\infty}\adj(I-h)h^{\mp k}w=0.\]
	    Also, by direct computation we observe that 
	    \[\adj(I-h)v_0^h=c_1(I-h)v_0^h\] 
	    where $c_1(I-h)=\tr(\wedge^{2n}(I-h))$. As $\adj(I-h)v_0^h=c_1(I-h)v_0^h$ and $\adj(I-h)w=0$ for all $w\in (V_+^h\oplus V_-^h)= (v_0^h)^\perp$, we obtain that \[c_1(I-h)=\tr(\adj(I-h)).\]
	    Furthermore, as $(u-\alpha(g)v_0^h)\in(v_0^h)^\perp$ we obtain that \[\adj(I-h)u=\tr(\adj(I-h))\alpha(g)v_0^h.\]
	    Hence, we derive that
	    \[\langle\adj(I-h)u\mid u\rangle=\tr(\adj(I-h))\alpha(g)\langle v_0^h\mid u\rangle=\tr(\adj(I-h))\alpha(g)^2\]
	    and our result follows.
	\end{proof}
	
	\begin{definition}\label{def:dermarg}
		Suppose $g_t\in\sG$ with $g_0=g$, is an analytic one parameter family of pseudo-hyperbolic elements with
		\[G=\left.\frac{d}{dt}\right|_{t=0}g_tg^{-1}\in\mathfrak{g}.\] Then we define
		\[\dot{\alpha}(g,G)\defeq\left.\frac{d}{dt}\right|_{t=0}\alpha(g_t).\]
	\end{definition}

	\begin{corollary}\label{cor:alg}
		Suppose $g_t\in\sG$ is an analytic one parameter family of pseudo-hyperbolic elements with $g_0=g$ and tangent direction $G\in\fg$ as in Definition \ref{def:dermarg}. Then the function $\dot{\alpha}^2$ is a rational function in the variable $(g,G)$.
	\end{corollary}
	\begin{proof}
        We use Lemma \ref{lem:alg} and obtain that for some real valued multivariate polynomials $p$ and $q$,
        \[\alpha(g_t)^2=\frac{p(g_t)}{q(g_t)}.\]
        Hence by taking derivative on both sides we obtain that
        \[2\alpha(g)\dot{\alpha}(g,G)=\frac{q(g)\left.\frac{d}{dt}\right|_{t=0}p(g_t)-p(g)\left.\frac{d}{dt}\right|_{t=0}q(g_t)}{q(g)^2}\]
        As $p$ and $q$ are both real valued multivariate polynomials in $g$, we obtain that $\left.\frac{d}{dt}\right|_{t=0}p(g_t)$ and $\left.\frac{d}{dt}\right|_{t=0}q(g_t)$ both are real valued multivariate polynomials in $(g,G)$. We denote them respectively by $\dot{p}(g,G)$ and $\dot{q}(g,G)$ to obtain:
        \[\alpha(g)^2\dot{\alpha}(g,G)^2=\frac{(q(g)\dot{p}(g,G)-p(g)\dot{q}(g,G))^2}{4q(g)^4}.\]
       Finally, our result follows by computing that
        \[\dot{\alpha}(g,G)^2=\frac{(q(g)\dot{p}(g,G)-p(g)\dot{q}(g,G))^2}{4p(g)q(g)^3}.\]
	\end{proof}

	\subsection{Infinitesimal rigidity of Margulis invariants: fixed linear part}
	
	In this subsection we will prove that the marked Margulis invariant spectrum determines a Margulis spacetime up to conjugacy when its linear part is kept fixed. The argument used here is like the Margulis type argument used in \cite{Kim} to show global rigidity of the marked Margulis invariant spectrum. In fact, the arguments presented in this subsection also provide an alternate proof of global rigidity of the marked Margulis invariant spectrum of elements in $\sHom_{\sA}(\Gamma,\sG)$ with fixed linear parts. We recall that $\sH=\Go$, $\sT=\R^{2n+1}$, $\sG=\Ga$ and $\fh$, $\ft$, $\fg$ are respectively their Lie algebras.
	
	\begin{remark}\label{rem:dermarg}
	    Suppose $g=(h,v)\in\sG=\sH\ltimes\sT$ and $G=(H,V)\in\fg=\fh\oplus\ft$ then we recall that
	    \[\sad(g)(G):=gGg^{-1}=(hHh^{-1},hV-hHh^{-1}v)\in\fg=\fh\oplus\ft.\]
	    Indeed, there exist $\{g_t=(h_t,v_t)\mid t\in(-1,1)\}\subset\sG=\sH\ltimes\sT$ such that 
	    \[G=\left.\frac{d}{dt}\right|_{t=0}g_tg^{-1}\]
    	and we obtain that
    	\[\left.\frac{d}{dt}\right|_{t=0}(hh_th^{-1},v+hv_t-hh_th^{-1}v)=(hHh^{-1},hV-hHh^{-1}v).\]
    	It follows that for $G^\prime=(H^\prime,V^\prime)\in\fg=\fh\oplus\ft$ we have
    	\[[G^\prime,G]=([H^\prime,H],H^\prime V-HV^\prime).\]
    	Moreover, suppose $\rho_t\in\sHom_{\sA}(\Gamma,\sG)$ is an analytic one parameter family of representations with $\rho_0=\rho$ and for any $\gamma\in\Gamma$ let
		\[{U}(\gamma)\defeq\left.\frac{d}{dt}\right|_{t=0}\rho_t(\gamma)\rho(\gamma)^{-1}.\]
		We observe that for all $\gamma_1,\gamma_2\in\Gamma$ we have 
		\begin{align*}
		{U}(\gamma_1\gamma_2)&=\left.\frac{d}{dt}\right|_{t=0}\rho_t(\gamma_1\gamma_2)\rho(\gamma_1\gamma_2)^{-1}\\
		&=\left(\left.\frac{d}{dt}\right|_{t=0}\rho_t(\gamma_1)\rho(\gamma_2)+\rho(\gamma_1)\left.\frac{d}{dt}\right|_{t=0}\rho_t(\gamma_2)\right)\rho(\gamma_1\gamma_2)^{-1}\\
		&={U}(\gamma_1)+ \sad(\rho(\gamma_1)){U}(\gamma_2).
		\end{align*}
		Hence for all $\gamma\in\Gamma$ we obtain $(\rho(\gamma),{U}(\gamma))\in\sG\ltimes_{\sad}\fg$ and it follows that $(\rho,{U})\in\sHom(\Gamma,\sG\ltimes_{\sad}\fg)$. 
	\end{remark}
	
	\begin{notation}
	Henceforth, we use the following notations to denote various projection maps. Let $p:\sG\ltimes_{\sad}\fg\rightarrow\sG$, $\pi:\sG\ltimes_{\sad}\fg\rightarrow\sH\ltimes_{\sad}\fh$, $\pi^\prime:\sG\rightarrow\sH$ and $p^\prime: \sH\ltimes_{\sad}\fh\rightarrow\sH$ be the natural projection maps. Then $\pi,\pi^\prime,p,p^\prime$ are homomorphisms and the following diagram commutes:
		\[ \begin{tikzcd}
		\sG\ltimes_{\sad}\fg \arrow{r}{p} \arrow[swap]{d}{\pi} & \sG \arrow{d}{\pi^\prime} \\%
		\sH\ltimes_{\sad}\fh \arrow{r}{p^\prime}& \sH.
		\end{tikzcd}
		\] 
	Moreover, by abusing notation we also denote the natural projection map from $\sHom(\Gamma,\sG\ltimes_{\sad}\fg)$ to $\sHom(\Gamma,\sG)$ by $p$ and define \[\sHom_{\sA}(\Gamma,\sG\ltimes_{\sad}\fg)\defeq p^{-1}\left(\sHom_{\sA}(\Gamma,\sG)\right).\]
	We also define the following spaces:
	\begin{enumerate}
	    \item the space of $\sad\circ\rho$-\emph{cocycles}
	    \[\mathsf{Z}^1_{\sad\circ\rho}(\Gamma,\fg):=\{{U}:\Gamma\to\fg\mid {U}(\gamma\eta)=\sad(\rho(\gamma)){U}(\eta)+{U}(\gamma)\},\]
	    \item the space of $\sad\circ\rho$-\emph{coboundaries}
	    \[\mathsf{B}^1_{\sad\circ\rho}(\Gamma,\fg):=\{G-\sad\circ\rho(\cdot)(G)\mid G\in\fg\}\subset\mathsf{Z}^1_{\sad\circ\rho}(\Gamma,\fg),\]
	    \item the space of group cohomologies
	    \[\mathsf{H}^1_{\sad\circ\rho}(\Gamma,\fg):=\mathsf{Z}^1_{\sad\circ\rho}(\Gamma,\fg)/\mathsf{B}^1_{\sad\circ\rho}(\Gamma,\fg).\]
	\end{enumerate}

	\end{notation}
	
	\begin{lemma}\label{lem:fixedlin}
		Suppose $(g,G)\in\sG\ltimes_{\sad}\fg$ be such that $g=(h,v)\in\sH\ltimes\sT$ and $G=(0,V)\in\fh\oplus\ft$. Then 
		\[\dot{\alpha}{(h,v,0,V)}=\alpha(h,V).\]
	\end{lemma}
	\begin{proof}
	    Let $\{g_t=(h,v+tV)\mid t\in(-1,1)\}\subset\sG=\sH\ltimes\sT$. Then
	    \[G=\left.\frac{d}{dt}\right|_{t=0}g_tg^{-1}=\left.\frac{d}{dt}\right|_{t=0}(h,v+tV)(h^{-1},-h^{-1}v)=(0,V).\]
	    Finally, we have
	    \[\dot{\alpha}(g,G)=\left.\frac{d}{dt}\right|_{t=0}\alpha(g_t)=\left.\frac{d}{dt}\right|_{t=0}\langle v+tV\mid v_0^h\rangle=\langle V\mid v_0^h\rangle=\alpha(h,V).\]
    \end{proof}	    
	Suppose $\rho\in\sHom_{\sA}(\Gamma,\sG)$ and ${U}\in\mathsf{Z}^1_{\sad\circ\rho}(\Gamma,\fg)$. Then observe that $(\rho,{U})\subset\sHom_{\sA}(\Gamma,\sG\ltimes_\sad\fg)$.
    \begin{lemma}\label{lem:der}
        Suppose ${U}\in\mathsf{Z}^1_{\sad\circ\rho}(\Gamma,\fg)$ is such that $\pi((\rho,U)(\Gamma))\subset\sH\subset\sH\ltimes_\sad\fh$ i.e. for all $\gamma\in\Gamma$ 
        \[{U}(\gamma)=(0,{V}(\gamma))\in\fg=\fh\oplus\ft\] 
        for some $V(\gamma)\in\ft$. Then $V\in\mathsf{Z}^1_{L_\rho}(\Gamma,\ft)\cong\mathsf{Z}^1_{L_\rho}(\Gamma,\sT)$.
    \end{lemma}
	\begin{proof}  
	We use Remark \ref{def:dermarg} and observe that for $\gamma,\eta\in\Gamma$ the following holds:
		\begin{align*}
		    (0,{V}(\gamma\eta))&={U}(\gamma\eta)=\rho(\gamma){U}(\eta)\rho(\gamma)^{-1}+U(\gamma)\\
		    &=(L_\rho(\gamma),u_\rho(\gamma))(0,{V}(\eta))(L_\rho(\gamma)^{-1},-L_\rho(\gamma)^{-1}u_\rho(\gamma))+(0,{V}(\gamma))\\
		    &=(0,L_\rho(\gamma){V}(\eta)-0)+(0,{V}(\gamma))=(0,L_\rho(\gamma){V}(\eta)+{V}(\gamma)).
		\end{align*}
	Hence, $V\in\mathsf{Z}^1_{L_\rho}(\Gamma,\ft)$. Moreover, we recall that $\sT=\R^{2n+1}$ and it follows that $\sT\cong\ft$.
	\end{proof}
	
	\begin{proposition}\label{prop:lin}
		Suppose $\rho:\Gamma\to\sG$ is a Margulis spacetime such that $L_\rho(\Gamma)$ is Zariski dense in $\sH$ and $U\in\mathsf{Z}^1_{\sad\circ\rho}(\Gamma,\fg)$ with $\pi((\rho,U)(\Gamma))\subset\sH$. Then $U\in\mathsf{B}^1_{\sad\circ\rho}(\Gamma,\fg)$ if and only if 
		\[\dot{\alpha}(\rho(\gamma),{U}(\gamma))=0\]
		for all $\gamma\in\Gamma$.
	\end{proposition}
	\begin{proof}
		We use Lemma \ref{lem:der} to obtain that $U=(0,V)$ for some $V\in\mathsf{Z}^1_{L_\rho}(\Gamma,\sT)$ and use Lemma \ref{lem:fixedlin} to deduce that
		\[\dot{\alpha}(\rho(\gamma),{U}(\gamma))=\alpha(L_\rho(\gamma),V(\gamma)).\]
		
		We first prove that if $\dot{\alpha}(\rho(\gamma),{U}(\gamma))=0$ for all $\gamma\in\Gamma$ then $U\in\mathsf{B}^1_{\sad\circ\rho}(\Gamma,\fg)$. We recall from Lemma \ref{lem:alg} that $\alpha^2$ is an algebraic function on $\sG$. Hence the Zariski closure $\cG$ of $(L_\rho,V)(\Gamma)$ inside $\sG=\sH\ltimes\sT$ is also contained in the zero set of $\alpha^2$.
		
		Recall that the natural projection map $\pi^\prime:\sG\rightarrow\sH$ is a homomorphism and in the following three points we show that $\pi^\prime:\cG\rightarrow\sH$ is, in fact, an isomorphism. 
		\begin{enumerate}
			\item ($\pi^\prime(\cG)=\sH$): We observe that $\cG$ is normalized by $(L_\rho,V)(\Gamma)$. Hence, $\pi^\prime(\cG)$ is normalized by $L_\rho(\Gamma)$. Also, $L_\rho(\Gamma)$ is Zariski dense in $\sH$. Hence $\pi^\prime(\cG)$ is a normal subgroup of $\sH$. But $\sH$ is simple and $L_\rho(\Gamma)$ is not trivial. Therefore, we obtain $\pi^\prime(\cG)=\sH$.
			\item ($\cG\cap\sT\subsetneq\sT$): Suppose $h\in\sH$ is a pseudo-hyperbolic element. We use the definition of a Margulis invariant and observe that \[\alpha(h,v_0^h)=\langle v_0^h\mid v_0^h\rangle=1\neq0.\] 
			Therefore, it follows that $\left(h,v_0^h\right)\notin\cG$ i.e. $\cG\subsetneq\sG$. We claim that this implies $\cG\cap\sT\subsetneq\sT$. Indeed, on the contrary, if $\cG\cap\sT=\sT$ then using point (1) we get $\cG=\sG$. Which is a contradiction.
			\item ($\cG\cap\sT$ is trivial): We recall that $\cG$ is normalized by $(L_\rho,V)(\Gamma)$. Hence for any $(I,v)\in\cG\cap\sT$ and for any $\gamma\in\Gamma$ we have
			\[(L_\rho(\gamma),V(\gamma))(I,v)(L_\rho(\gamma),V(\gamma))^{-1}=(I,L_\rho(\gamma)v)\in\cG\cap\sT.\]
			Moreover, $L_\rho(\Gamma)$ is Zariski dense in $\sH$. Hence for any $(I,v)\in\cG\cap\sT$ and any $h\in\sH$ we obtain that $(I,hv)\in\cG\cap\sT$. But the action of $\sH$ on $\sT$ does not preserve any non-trivial proper subspaces. Hence our claim follows from the above point (2).
		\end{enumerate}
		Therefore, the homomorphism $\pi^\prime:\cG\rightarrow\sH$ is an isomorphism of algebraic groups. Let $\sv(g)\in\sT$ be such that $(g,\sv(g))=(\left.\pi^\prime\right|_\cG)^{-1}(g)$ for all $g\in\sH$.
		As $\pi^\prime$ is a smooth isomorphism we get that $\sv:\sH\rightarrow\sT$ is a smooth map satisfying the following property for all $g,h\in\sH$:
		\[\sv(gh)=g\sv(h)+\sv(g).\]
		Hence, $\sv$ induces a map, which we again denote by $\sv$, from $\fh$ to $\ft$. We compute and observe that $\sv:\fh\to\ft$ is a derivation i.e. for all $G,H\in\fh$
		\[\sv([G,H])=G\sv(H)-H\sv(G)\]
		where $[,]$ denotes the Lie bracket of $\fh$. We use Whitehead's Lemma (please see end of section 1.3.1 in page 13 of \cite{Raghu}) to conclude that $\sv:\fh\to\ft$ is an inner derivation i.e. there exists some $V\in\ft$ such that
		\[\sv(H)=HV.\]
		We observe that $\sv_1:\sH\to\sT$, such that $\sv_1(h)=hV-V$, satisfy the equation:
		\[\sv_1(gh)=g\sv_1(h)+\sv_1(g).\]
		Now using uniqueness of solutions of first order differential equations we obtain that $\sv=\sv_1$.
		As $(L_\rho,V)(\Gamma)\subset\cG$, it follows that \[V(\gamma)=\sv(L_\rho(\gamma))=L_\rho(\gamma)V-V\] 
		for all $\gamma\in\Gamma$. Hence, for all $\gamma\in\Gamma$ we deduce that
		\[U(\gamma)=(0,L_\rho(\gamma)V-V)=\sad(\rho(\gamma))(0,V)-(0,V)\]
		and it follows that $U\in\mathsf{B}^1_{\sad\circ\rho}(\Gamma,\fg)$.

		Now we will prove the other implication. Suppose $U\in\mathsf{B}^1_{\sad\circ\rho}(\Gamma,\fg)$. We recall that $U=(0,V)$ for some $V\in\mathsf{Z}^1_{L_\rho}(\Gamma,\sT)$. Hence, there exists some $v\in\sT$ such that
		\[V(\gamma)=L_\rho(\gamma)v-v.\] 
		Then using Lemma \ref{lem:der} we conclude that $\dot{\alpha}{(\rho(\gamma),{U}(\gamma))}=\alpha(L_\rho(\gamma),V(\gamma))=0$ for all $\gamma\in\Gamma$.
	\end{proof}

	\subsection{Infinitesimal rigidity of Margulis invariants: general}
	Drumm--Goldman \cite{DG} and Kim \cite{Kim} showed that any two points on $\sM(\Gamma,\sG)$ have distinct marked Margulis invariant spectrums. In this subsection we prove an infinitesimal version of this result.
	\begin{lemma}\label{lem.zd}
		Suppose $\varrho\in\sHom_{\sA}(\Gamma,\sG)$ and $u\in\mathsf{Z}^1_{\varrho}(\Gamma,\sT)$ is a $\varrho$-cocycles which is not a $\varrho$-coboundary. Then $(\varrho,u)(\Gamma)$ is Zariski dense inside $\sG$.
	\end{lemma}
	\begin{proof}
		Let $\cR$ be the Zariski closure of $(\varrho,u)(\Gamma)$ inside $\sG$. As $(\varrho,u)(\Gamma)$ normalizes $\cR$, we obtain that $\varrho(\Gamma)$, a Zariski dense subgroup of $\sH$, normalizes $\pi^\prime(\cR)$. Hence $\pi^\prime(\cR)$ is a normal subgroup of $\sH$. But $\sH$ is simple and $\varrho(\Gamma)$ is not trivial. Therefore, we obtain $\pi^\prime(\cR)=\sH$.
		
		Now if $\cR\cap\sT$ is trivial, then $\pi^\prime$ gives an isomorphism between $\cR$ and $\sH$. Now we use Whitehead's Lemma to get that $\cR$ is conjugate to $\sH$. Hence $\alpha_{(\varrho,u)}(\gamma)=0$ for all $\gamma\in\Gamma$, a contradiction as $u$ is not a $\varrho$-coboundary. 
		
		Therefore, there exists $(I,v)\in\cR\cap\sT$ with $v\neq0$. Moreover, as $\cR\cap\sT$ is normal inside $\cR$ and $\pi^\prime(\cR)=\sH$, we obtain that $(I,hv)\in\cR\cap\sT$ for all $h\in\sH$. As $\sH$ does not preserve any non-trivial proper subspace of $\sT$, we conclude that $\cR\cap\sT=\sT$ and it follows that $\cR=\sG$.
	\end{proof}

	\begin{lemma}\label{lem:zerodmarg}
		Suppose $g\in\sG$, $G\in\fg$ and $g_t=(h_t,u_t)\in\sG=\sH\ltimes\sT$ is a one parameter family with $g_0=g$ and $\left.\frac{d}{dt}\right|_{t=0}g_tg^{-1}=G$. Then 
		\[\dot{\alpha}(g,\sad(g)(G)-G)=0.\]
	\end{lemma}
	\begin{proof}
	    We use Remark \ref{rem:marginv} (3) to obtain that $\alpha(g_tgg_t^{-1})=\alpha(g)$ and conclude by observing that
		\[\dot{\alpha}(g,G-\sad(g)(G))=\left.\frac{d}{dt}\right|_{t=0}\alpha(g_tgg_t^{-1})=\left.\frac{d}{dt}\right|_{t=0}\alpha(g)=0.\]
	\end{proof}
	
	\begin{theorem}\label{thm:infriggen}
		Suppose $\rho:\Gamma\to\sG$ is a Margulis spacetime with a Zariski dense linear part and ${U}\in\mathsf{Z}^1_{\sad\circ\rho}(\Gamma,\fg)$. Then ${U}\in\mathsf{B}^1_{\sad\circ\rho}(\Gamma,\fg)$ if and only if 
		\[\dot{\alpha}(\rho(\gamma),{U}(\gamma))=0\]
		for all $\gamma\in\Gamma$.
	\end{theorem}
	\begin{proof}
		If ${U}\in\mathsf{B}^1_{\sad\circ\rho}(\Gamma,\fg)$, then there exists some $G\in\fg$ such that 
		\[{U}(\gamma)=G-\sad\circ\rho(\gamma)(G)\]
		for all $\gamma\in\Gamma$. Now using Lemma \ref{lem:zerodmarg} we deduce that for all $\gamma\in\Gamma$,
		\[\dot{\alpha}(\rho(\gamma),{U}(\gamma))=\dot{\alpha}(\rho(\gamma),G-\sad\circ\rho(\gamma)(G))=0.\]

		Now we show that if $\dot{\alpha}(\rho(\gamma),{U}(\gamma))=0$ for all $\gamma\in\Gamma$, then $U\in\mathsf{B}^1_{\sad\circ\rho}(\Gamma,\fg)$.	We use Corollary \ref{cor:alg} to get that $\dot{\alpha}^2$ is an algebraic function on $\sG\ltimes_{\sad}\fg$. Hence, the Zariski closure $\cH$ of $(\rho,{U})(\Gamma)$ inside $\sG\ltimes_{\sad}\fg$ is also contained in the zero set of $\dot{\alpha}^2$.
		
		We observe that the kernel of $p: \cH\rightarrow \sG$, denoted by $\fk$, satisfies $\fk=\fg\cap\cH$. In the following four steps we show that $p:\cH\to\sG$ is an isomorphism:
		\begin{enumerate}
			\item ($p(\cH)=\sG$): Indeed, as $\rho(\Gamma)\subset p(\cH)$, $p(\cH)$ is normalized by $\rho(\Gamma)$ and $\rho(\Gamma)$ is Zariski dense in $\sG$ (see Lemma \ref{lem.zd}), we deduce that $p(\cH)$ is a normal subgroup of $\sG$. We observe that $\sT$ contains all proper normal subgroups of $\sG$ and as $\rho$ is a Margulis spacetime with a Zariski dense linear part, $\rho(\Gamma)$ is not a subgroup of $\sT$. Hence $p(\cH)$ is also not a subgroup of $\sT$ but $p(\cH)$ is normal inside $\sG$. Therefore, we conclude $p(\cH)=\sG$.
			
			\item ($\fk\subsetneq\fg$): Suppose $h\in\sH$ is a pseudo-hyperbolic element. We use Lemma \ref{lem:fixedlin} and observe that \[\dot{\alpha}(h,0,0,v_0^h)=\alpha(h,v_0^h)=\langle v_0^h\mid v_0^h\rangle=1\neq0.\] 
			As $\cH$ is a subset of the zero set of $\dot{\alpha}^2$, we obtain $\cH\subsetneq\sG\ltimes_{\sad}\fg$. We claim that this implies $\fk\subsetneq\fg$. Indeed, on the contrary, if $\fk=\fg$ then using point (1) we get $\cH=\sG\ltimes_{\sad}\fg$, a contradiction.
			
			\item ($\fk\subset\ft$): We observe that $\cH$ is normalized by $(\rho,{U})(\Gamma)$. Hence $\fk$, the kernel of $p$, is normalized by $\rho(\Gamma)$ and $\rho(\Gamma)$ is Zariski dense in $\sG$. It follows that $\fk$ is an ideal of $\fg$ i.e $[\fk,\fg]\subset\fk$. Also, $\fh$ is simple and $\fk\cap\fh$ is an ideal of $\fh$, hence $\fk\cap\fh$ is either trivial or $\fh$. Therefore, we will obtain $\fk\subset\ft$ once we show that $\fk\cap\fh\neq\fh$. We will show this via contradiction. On the contrary, suppose $\fk\cap\fh=\fh$. Using point (2) we get that $\fk=\fh+\fu$ where $\fu\subsetneq\ft$. Moreover, $[\fk,\fg]\subset\fk$ implies that $\fh\fu\subset\fu$. Indeed, for any $(H^\prime,0)\in\fg=\fh+\ft$ and $(H,\tau)\in\fk$ we have \[[(H^\prime,0),(H,\tau)]=([H^\prime,H],H^\prime\tau)\in\fk=\fh+\fu.\] 
			We also know that $\sH$ does not preserve any non-trivial proper subspace of $\sT$. Hence, $\fu$ is trivial and $\fk=\fh$. This is a contradiction since $\fk$ is an ideal of $\fg$ but $\fh$ is not an ideal of $\fg$.
			
			\item ($\fk$ is trivial): We observe that $\ft$ is an ideal of $\fg$. We also know that $\sH$ does not preserve any non-trivial proper subspace of $\sT$. Hence, either $\fk$ is trivial or $\fk=\ft$. If possible, let us assume that $\fk=\ft$. Then $p$ induces an isomorphism between the two algebraic groups
			$\cH/\ft$ and $\sG$. We again denote this isomorphism by $p$. We also note that
			\[\cH/\ft\subset(\sG\ltimes_\sad\fg)/\ft\cong\sG\ltimes_\sad(\fg/\ft).\]
			Let $\sv(g)\in(\fg/\ft)$ be such that \[\left(p|_{(\cH/\ft)}\right)^{-1}(g)=(g,\sv(g))\in(\cH/\ft)\subset\sG\ltimes_\sad(\fg/\ft).\]
			It follows that $\sv:\sG\to(\fg/\ft)$ is a smooth map satisfying
			\[\sv(gh)=\sad(g)\sv(h)+\sv(g)\]
			for all $g,h\in\sG$. Hence, $\sv$ induces a map, which we again denote by $\sv$, from $\fg$ to $\fg/\ft$. We observe that for all $G,H\in\fg$,
			\[\sv([G,H])=[G,\sv(H)]-[H,\sv(G)].\]
			Hence, $\sv$ is a derivation and by Whitehead's Lemma (please see end of section 1.3.1 in page 13 of \cite{Raghu}) we obtain that $\sv$ is an inner derivation i.e. there exist some $\Psi\in(\fg/\ft)$ such that
			\[\sv(G)=[G,\Psi]\]
			for all $G\in\fg$.	We consider $\sv_1:\sG\to(\fg/\ft)$ such that for all $g\in\sG$,
			\[\sv_1(g):=\sad(g)(\Psi)-\Psi\]
			and observe that for all $g,h\in\sG$ we have			
			\[\sv_1(gh)=\sad(g)\sv_1(h)+\sv_1(g).\]
			Now by uniqueness of the solutions of first order differential equations we obtain that $\sv=\sv_1$.
			It follows that
			\[{U}(\gamma)+\ft=\sad(\rho(\gamma))(\Psi)-\Psi.\]
			As $\Psi\in(\fg/\ft)$, there exists some $H_0\in\fh$ such that $G_0=(H_0,0)\in\fg=\fh\oplus\ft$ and $\Psi=G_0+\ft$. Therefore, we deduce that
			\begin{align*}
			{U}(\gamma)+\ft&=\sad(\rho(\gamma))(G_0+\ft)-(G_0+\ft)\\
			&=(\sad\circ L_\rho(\gamma)(H_0)-H_0,0)+\ft
			\end{align*}
			We consider the conjugate representation \[(\rho,W):=(e,G_0)(\rho,{U})(e,-G_0)\] 
			and observe that
			$(\rho,W)(\Gamma)\subset\sG\ltimes_\sad(\{0\}\oplus\ft)$. Hence,
			\[(e,G_0)\cH(e,-G_0)\subset\sG\ltimes_\sad(\{0\}\oplus\ft).\]
			Moreover, $p(\cH)=\sG$ and $\fk=\ft$ imply that 
			\[(e,G_0)\cH(e,-G_0)=\sG\ltimes_\sad(\{0\}\oplus\ft).\]
			Suppose $h$ is a pseudo-hyperbolic element of $\sH$, $g_1=(h,0)\in\sG$ and $G_1=(0,v_0^h)\in\{0\}\oplus\ft$.
			Then $(g_1,G_1)\in\sG\ltimes_\sad(\{0\}\oplus\ft)$ and \[\dot{\alpha}(g_1,G_1)\neq0.\] 
			This is a contradiction since $\dot{\alpha}(\cH)\subset\{0\}$, 			 $(e,-G_0)(g_1,G_1)(e,G_0))=(g_1,G_1-G_0+\sad(g_1)(G_0))\in\cH$ and by Lemma \ref{lem:zerodmarg}
			\[\dot{\alpha}(g_1,G_1-G_0+\sad(g_1)(G_0))=\dot{\alpha}(g_1,G_1)\neq0.\]
			Therefore, our assumption was wrong and we deduce that $\ft$ is trivial.		
		\end{enumerate}
	
		As $p$ is an isomorphism and $\cH\subset\sG\ltimes_\sad\fg$ we obtain that 
		\[(p|_\cH)^{-1}(g)=(g,\sw(g))\in\sG\ltimes_\sad\fg\] 
		for some $\sw:\sG\to\fg$ and all $g\in\sG$. Furthermore, we check that
		\[\sw(gh)=\sad(g)\sw(h)+\sw(g)\]
		for all $g,h\in\sG$. Arguing similarly as before, using Whitehead's lemma, we obtain that there exists some $G_0\in\fg$ such that for all $g\in\sG$,
		\[\sw(g)=\sad(g)(G_0)-G_0.\]
		Therefore, for all $\gamma\in\Gamma$ we deduce that
		\[{U}(\gamma)=\sad(\rho(\gamma))(G_0)-G_0\]
		i.e. ${U}\in\mathsf{B}^1_{\sad\circ\rho}(\Gamma,\fg)$ and our result follows.		
	\end{proof}

	\section{Pressure form and convexity} \label{sec:pfc}
	
	In this section we define the pressure form on the moduli space of Margulis spacetimes and show that the restrictions of the pressure form on constant entropy sections of the moduli space are Riemannian. Moreover, we also show that the space of constant entropy sections with fixed linear parts is the boundary of a convex domain.
	
	\subsection{Pressure form on the moduli space}
	
	In this subsection we will define and study the pressure form on the moduli space of Margulis spacetimes. We will pullback concepts from thermodynamic formalism via the Labourie-Margulis invariant. Suppose $\Gamma$ is a word hyperbolic group which admits a Margulis spacetime. Then $\Gamma$ admits an Anosov representation. We know from Remark \ref{rem:ttma} that if a word hyperbolic group admits an Anosov representation then $\flow$ admits a topologically transitive metric Anosov flow. Suppose $\psi$ is the topologically transitive metric Anosov flow on $\flow$ as mentioned in Remark \ref{rem:ttma} and $\rho\in\sHom_{\sM}(\Gamma,\sG)$, $\rho_1\in\sHom_{\sA}(\Gamma,\sG)$. We define $R_T(\rho)\defeq R_T(f_\rho)=\{\gamma\in\cO\mid\alpha_\rho(\gamma)\leqslant T\}$ and consider the following:
	\begin{align*}
	h_\rho&\defeq h_{\st}{\left(\phi^{f_\rho}\right)}=\lim_{T\to\infty}\frac{1}{T}\log|R_T(\rho)|,\\
	I(\rho,\rho_1)&\defeq I(f_\rho,f_{\rho_1})=\lim_{T\to\infty}\frac{1}{|R_T(\rho)|}\sum_{\gamma\in R_T(\rho)}\frac{\alpha_{\rho_1}(\gamma)}{\alpha_\rho(\gamma)}.
	\end{align*}
	
	\begin{lemma}\label{lem:analent}
		Suppose $\rho\in\sHom_{\sM}(\Gamma,\sG)$ and $\rho_1\in\sHom_{\sA}(\Gamma,\sG)$. Then both $h_\rho$ and $I(\rho,\rho_1)$ exist and are finite. Moreover, $h_\rho$ is positive and the following maps are analytic:
		\begin{align*}
		h:\sHom_{\sM}(\Gamma,\sG)&\longrightarrow\R\\
		\rho&\longmapsto h_\rho,\\
		I:\sHom_{\sM}(\Gamma,\sG)\times\sHom_{\sA}(\Gamma,\sG)&\longrightarrow\R\\
		(\rho,\rho_1)&\longmapsto I(\rho,\rho_1).
		\end{align*}
	\end{lemma}
	\begin{proof}
		We note that ($\flow$, $\psi$) is topologically transitive and metric Anosov. Hence by Remark \ref{rem:posent} $h_{\st}\psi$ is positive. Now by Lemma \ref{lem:analytic} and Corollary \ref{cor:normalizedalpha} there exist H\"older continuous functions $f_\rho$ varying analytically in the coordinate $\rho$ such that $\int_\gamma f_\rho=\frac{\alpha_\rho(\gamma)}{\ell(\gamma)}$ for all $\gamma\in\cO$. Moreover, by the main theorem of \cite{GT} we get that for $\rho\in\sHom_{\sM}(\Gamma,\sG)$ the functions $f_\rho$ are Liv\v sic cohomologous to positive functions on the compact set $\flow$. Hence, for $\rho\in\sHom_{\sM}(\Gamma,\sG)$ there exist positive constants $c_\rho,C_\rho$ such that for all $\gamma\in\cO$:
		\[0<c_\rho<\frac{\alpha_\rho(\gamma)}{\ell(\gamma)}<C_\rho.\]
		Therefore, we obtain $R_{\frac{T}{C_\rho}}(\psi)\subset R_T(\rho)\subset R_{\frac{T}{c_\rho}}(\psi)$ and it follows that 
		\[0<\frac{h_{\st}\psi}{C_\rho}\leqslant h_\rho \leqslant \frac{h_{\st}\psi}{c_\rho}.\]
		Suppose $f_\rho$ is Liv\v sic cohomologous to $g_\rho>0$. Then the function $\frac{f_{\rho_1}}{g_\rho}$ is well defined and is a continuous function on the compact set $\flow$. Hence, there exist real numbers $c,C$, not necessarily positive, such that $c<\frac{f_{\rho_1}}{g_\rho}<C$. Therefore, 
		\[c\alpha_{\rho}(\gamma)=c\ell(\gamma)\int_\gamma g_{\rho}<\alpha_{\rho_1}(\gamma)=\ell(\gamma)\int_\gamma f_{\rho_1}<C\ell(\gamma)\int_\gamma g_{\rho}=C\alpha_{\rho}(\gamma)\]
		and it follows that $I(\rho,\rho_1)$ is finite and can possibly be non-positive. Finally, using Theorem \ref{thm:brpp} we get that the functions $h$ and $I$ are analytic.
	\end{proof}
	\begin{remark}
		Similar results about analytic variations of the topological entropy were obtained for Anosov flows on closed Riemannian manifolds in \cite{KKPW} and for projective Anosov representations in \cite{BCLS}.
	\end{remark}
	Suppose $\rho\in\sHom_{\sM}(\Gamma,\sG)$. Then by \cite{GT} we know that $f_\rho$ is Liv\v sic cohomologous to a positive function. Hence, we can use the variational principle and Abramov's formula \cite{Abra} to obtain that $P(\psi,-hf_\rho)=0$ if and only if $h=h_\rho$ (see Lemma \ref{lem:samba}). Let $\cH(\flow)$ be the space of Liv\v sic cohomology classes of pressure zero H\"older continuous functions on $\flow$. We define the thermodynamic mapping as follows:
	\begin{align*}
		\sth: \sHom_{\sM}(\Gamma,\sG) &\longrightarrow \cH(\flow)\\
		\rho &\longmapsto [-h_\rho f_\rho].
	\end{align*}
	We use Remark \ref{rem:marginv} (3), Corollary \ref{cor:normalizedalpha} and Theorem \ref{thm:liv} to deduce that the map $\sth$ induces the following map which we again denote by $\sth$:
	\begin{align*}
		\sth: \sM(\Gamma,\sG) &\longrightarrow \cH(\flow)\\
		[\rho] &\longmapsto [-h_\rho f_\rho].
	\end{align*}
	\begin{remark}\label{rem:prM}
		We recall that $\flow$ is a compact metric space, $\psi$ is a topologically transitive metric Anosov flow on $\flow$ and for $\rho\in\sHom_{\sM}(\Gamma,\sG)$ the functions $f_\rho$ are Liv\v sic cohomologous to positive functions. Moreover, entropy, intersection and pressure depend only on the Liv\v sic cohomology class of a function. Hence, the space $\cH(\flow)$ admits a pressure form coming from the pressure form on the space $\cP_\psi(\flow)$. In this article, we consider the pull-back of the pressure form on $\cH(\flow)$ via $\sth$ to define a pressure form on $\sM(\Gamma,\sG)$:
		\[\spr:\sT{\sM}(\Gamma,\sG)\times\sT{\sM}(\Gamma,\sG)\rightarrow\R.\] 
		Finally, using Proposition \ref{prop:bcls} we obtain that
		the pressure form on $\sM(\Gamma,\sG)$ is a positive semi-definite bilinear form. 
	\end{remark}
	\begin{remark}
		We recall that $\sHom_{\sA}(\Gamma,\sH)$ consists of Zariski dense, smooth points of the representation variety $\sHom(\Gamma,\sH)$ which are also Anosov with respect to suitable parabolic subgroups. Hence, the quotient space $\sA(\Gamma,\sH)$ is a manifold (for more details please see \cite{LuMa,JM,BCLS}). We use Remark \ref{rem:repvar} to recall that $\sA(\Gamma,\sG)$ is a vector bundle over $\sA(\Gamma,\sH)$. Hence, $\sA(\Gamma,\sG)$ is also a manifold.  Moreover, using Corollary \ref{cor:normalizedalpha} and Theorem \ref{thm:affano} to deduce that  and $\sM(\Gamma,\sG)$ is an open subset of $\sA(\Gamma,\sG)$. Hence, the space $\sM(\Gamma,\sG)$ is also a manifold and for $\rho\in\sHom_{\sM}(\Gamma,\sG)$ we have
		\begin{align*}
			\sT_{[\rho]}\sM(\Gamma,\sG)&=\sT_{[\rho]}\sA(\Gamma,\sG)=\sT_\rho\sHom_{\sA}(\Gamma,\sG)/\sT_\rho(\sad(\sG)\rho)\\
			&\cong\mathsf{Z}^1_{\sad\circ\rho}(\Gamma,\fg)/\mathsf{B}^1_{\sad\circ\rho}(\Gamma,\fg)=\mathsf{H}^1_{\sad\circ\rho}(\Gamma,\fg).
		\end{align*}		
	\end{remark}

	\begin{proposition}\label{prop:prZero}
		Suppose $\{\rho_t\}_{t\in\R}\subset\sHom_{\sM}(\Gamma,\sG)$ is an analytic one parameter family with $\rho_0=\rho$ and
		\[\left.\frac{d}{dt}\right|_{t=0}\rho_t(\gamma)\rho(\gamma)^{-1}={U}(\gamma).\] 
		Moreover, suppose $\spr_{[\rho]}([{U}],[{U}])=0$ and $\left.\frac{d}{dt}\right|_{t=0}h_{\rho_t}=0$. Then \[[{U}]=0\in\mathsf{H}^1_{\sad\circ\rho}(\Gamma,\fg)\cong\sT_{[\rho]}\sM(\Gamma,\sG).\]
	\end{proposition}
	
	\begin{proof}
		Suppose $\spr_{[\rho]}([{U}],[{U}])=0$. We use Proposition \ref{prop.var} and Remark \ref{rem:prM} to obtain that $\left.\frac{d}{dt}\right|_{t=0}h_{\rho_t}f_{\rho_t}$ is Liv\v sic cohomologous to zero. Hence, by Theorem \ref{thm:liv} we deduce that for all $\gamma\in\cO$,
		\[\int_\gamma\left.\frac{d}{dt}\right|_{t=0}h_{\rho_t}f_{\rho_t}=0.\]
		Moreover, by our hypothesis $\left.\frac{d}{dt}\right|_{t=0}h_{\rho_t}=0$. It follows that for all $\gamma\in\cO$,
		\begin{align*}
		0&=\int_\gamma\left.\frac{d}{dt}\right|_{t=0}h_{\rho_t}f_{\rho_t}=h_\rho\int_\gamma\left.\frac{d}{dt}\right|_{t=0} f_{\rho_t}=h_\rho\left.\frac{d}{dt}\right|_{t=0} \frac{\alpha_{\rho_t}(\gamma)}{\ell(\gamma)}.
		\end{align*}
		Hence, we obtain $\dot{\alpha}(\rho(\gamma),{U}(\gamma))=0$ for all infinite order elements $\gamma\in\Gamma$. Finally, we use Theorem \ref{thm:infriggen} and conclude our result.
	\end{proof}

	\subsection{Margulis multiverses}
	
	In this subsection we study the constant entropy sections of the moduli space of Margulis spacetimes. We define a Margulis multiverse of entropy $k$ to be the section of the quotient moduli space of Margulis spacetimes with constant entropy $k$. We show that Margulis spacetimes having the same entropy bound a convex domain in the space of Margulis spacetimes with fixed linear parts and the restrictions of the pressure form on the Margulis multiverses are Riemannian. 
	
	\begin{lemma}\label{lem:enteq}
		Suppose $\varrho\in\sHom_{\sA}(\Gamma,\sH)$ and $u,v,w\in\mathsf{Z}^1_{\varrho}(\Gamma,\sT)$ with $(\varrho,u)\in\sHom_{\sM}(\Gamma,\sG)$. Then the following holds:
		\begin{enumerate}
			\item for all positive real numbers $a$,
			\[h(\varrho,au)=\frac{h(\varrho,u)}{a}\]
			\item for all real numbers $a,b$ with $a>0$,
			\[I((\varrho,au),(\varrho,bv))=\frac{b}{a}I((\varrho,u),(\varrho,v))\]
			\item $I((\varrho,u),(\varrho,v+w))=I((\varrho,u),(\varrho,v))+I((\varrho,u),(\varrho,w))$.
		\end{enumerate}
	\end{lemma}
	\begin{proof}
		We recall that 
		\[h(\varrho,au)=\lim_{T\to\infty}\frac{1}{T}\log|R_T(\varrho,au)|\] 
		where $R_T(\varrho,au)=\{\gamma\in\cO\mid\alpha_{(\varrho,au)}(\gamma)\leqslant T\}$. Moreover, we have
		\[\alpha{(\varrho(\gamma),au(\gamma))}=a\alpha(\varrho(\gamma),u(\gamma))\] 
		for all $\gamma\in\Gamma$. Hence,
		\begin{align*}
		R_T(\varrho,au)&=\{\gamma\in\cO\mid\alpha_{(\varrho,au)}(\gamma)\leqslant T\}=\{\gamma\in\cO\mid a\alpha_{(\varrho,u)}(\gamma)\leqslant T\}\\
		&=\left\{\gamma\in\cO\mid \alpha_{(\varrho,u)}(\gamma)\leqslant \frac{T}{a}\right\}=R_{\frac{T}{a}}(\varrho,u),
		\end{align*}
		and we conclude our first equality by observing that
		\begin{align*}
		h(\varrho,au)&=\lim_{T\to\infty}\frac{1}{T}\log|R_T(\varrho,au)|=\lim_{T\to\infty}\frac{1}{T}\log|R_{\frac{T}{a}}(\varrho,u)|\\
		&=\frac{1}{a}\lim_{\frac{T}{a}\to\infty}\frac{1}{T/a}\log|R_{\frac{T}{a}}(\varrho,u)|=\frac{h(\varrho,u)}{a}.
		\end{align*}
		Similarly, we further observe that
		\begin{align*}
		I((\varrho,au),(\varrho,bv+cw))&=\lim_{T\to\infty}\frac{1}{|R_T(\varrho,au)|}\sum_{\gamma\in R_T(\varrho,au)}\frac{\alpha_{(\varrho,bv+cw)}(\gamma)}{\alpha_{(\varrho,au)}(\gamma)}\\
		&=\lim_{{\frac{T}{a}}\to\infty}\frac{1}{|R_{\frac{T}{a}}(\varrho,u)|}\sum_{\gamma\in R_{\frac{T}{a}}(\varrho,u)}\frac{b\alpha_{(\varrho,v)}(\gamma)+c\alpha_{(\varrho,w)}(\gamma)}{a\alpha_{(\varrho,u)}(\gamma)}\\
		&=\frac{1}{a}(bI((\varrho,u),(\varrho,v))+cI((\varrho,u),(\varrho,w))),
		\end{align*}
		and our result follows.
	\end{proof}
	Let $\sM(\Gamma,\sG)_k\defeq\sM(\Gamma,\sG)\cap h^{-1}(k)$. We note that $\sM(\Gamma,\sG)_k$ is called a \emph{Margulis multiverse of entropy} $k$.
	\begin{lemma}\label{lem:constent}
		 Suppose $\sM(\Gamma,\sG)_k$ is a Margulis multiverse of entropy $k$. Then $\sM(\Gamma,\sG)_k$ is a codimension $1$ analytic submanifold of $\sM(\Gamma,\sG)$. Moreover, 
		\[\sM(\Gamma,\sG)_1=\{[(L_\rho,h(\rho)u_\rho)]\mid[\rho]\in\sM(\Gamma,\sG)\}.\]
	\end{lemma}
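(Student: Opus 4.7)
The plan is to use the entropy scaling formula from Lemma \ref{lem:enteq} to show that $h: \M(\Gamma,\sG) \to (0,\infty)$ is an analytic submersion, whence the first assertion is the regular value theorem in the analytic category; the identification of $\M(\Gamma,\sG)_1$ is then immediate from $h(c\rho) = h(\rho)/c$.

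First I would check that the scaling operation $[\rho] \mapsto [c\rho]$ for $c > 0$, defined on representatives by $(L_\rho, u_\rho) \mapsto (L_\rho, cu_\rho)$, descends from $\sHom_\M(\Gamma,\sG)$ to the quotient $\M(\Gamma,\sG)$. A one-line computation in the semidirect product shows that conjugation by $g = (h_0, v_0) \in \sG$ intertwines with scaling via the modified element $g' \defeq (h_0, cv_0)$, i.e.\ $g'(c\rho)g'^{-1} = c(g\rho g^{-1})$; this is the only place where the semidirect-product structure must be unpacked. With that in hand, I would consider the analytic path $t \mapsto [(1+t)\rho]$ in $\M(\Gamma,\sG)$. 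By Lemma \ref{lem:enteq},
\[
h([(1+t)\rho]) = \frac{h([\rho])}{1+t}, \qquad \text{so} \qquad \left.\frac{d}{dt}\right|_{t=0} h([(1+t)\rho]) = -h([\rho]) < 0,
\]
where the strict inequality uses positivity of $h$ from Lemma \ref{lem:analent}. Since $h$ is a well-defined analytic function on the quotient, the chain rule forces $dh|_{[\rho]}$ to be a nonzero covector on $\sT_{[\rho]}\M(\Gamma,\sG)$, so every $k > 0$ is a regular value of $h$ and the analytic implicit function theorem yields that $\M(\Gamma,\sG)_k$ is a codimension one analytic submanifold.

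For the identification $\M(\Gamma,\sG)_1 = \{h(\rho)\rho \mid \rho \in \M(\Gamma,\sG)\}$, the inclusion $\supseteq$ follows from Lemma \ref{lem:enteq}, since $h(h(\rho)\rho) = h(\rho)/h(\rho) = 1$ (and $h(\rho)\rho$ remains a Margulis spacetime because scaling the translation cocycle by the positive constant $h(\rho)$ scales every Margulis invariant by the same positive factor, preserving non-vanishing and sign). The reverse inclusion is trivial: any $\sigma \in \M(\Gamma,\sG)_1$ equals $h(\sigma)\sigma$ since $h(\sigma) = 1$. The only mild obstacle anywhere in the argument is the well-definedness of scaling on the quotient; once that is verified, both assertions reduce to Lemma \ref{lem:enteq} combined with standard regular-value considerations.
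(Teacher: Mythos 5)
Your proposal is correct and follows essentially the same route as the paper: both prove the identification of $\M(\Gamma,\sG)_1$ directly from the scaling identity $h(a\rho)=h(\rho)/a$ of Lemma \ref{lem:enteq}, and both obtain the codimension-one submanifold statement by differentiating $h$ along the scaling path $t\mapsto(1+t)\rho$ (the paper uses $t\mapsto(L,u/(1+t))$, which is the same path up to reparametrization) to see that $dh$ is nonvanishing and invoking the implicit/regular-value theorem with the analyticity of $h$ from Lemma \ref{lem:analent}. Your extra observation that the scaling operation descends to the conjugation quotient is a worthwhile point of rigor that the paper leaves implicit.
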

	\begin{proof}
		Firstly, we use Remark \ref{rem:marginv} (3) to observe that $h(\rho)=h(g\rho g^{-1})$ for all $g\in\sG$ and by Lemma \ref{lem:enteq} we notice that $h(L_\rho,h(\rho)u_\rho)=1$. Hence, 
		\[\sM(\Gamma,\sG)_1=\{[(L_\rho,h(\rho)u_\rho)]\mid[\rho]\in\sM(\Gamma,\sG)\}.\]
		Moreover, using Lemma \ref{lem:analent} we obtain that $h$ is analytic. We notice that
		\[\left.\frac{d}{dt}\right|_{t=0}h(L_\rho,\frac{u_\rho}{1+t})=h(L_\rho,u_\rho)\left.\frac{d}{dt}\right|_{t=0}(1+t)\neq0.\]
		Hence, by Implicit function theorem we conclude that $\sM(\Gamma,\sG)_k$ is an analytic submanifold of $\sM(\Gamma,\sG)$ of codimension $1$.
	\end{proof}
	\begin{theorem}\label{thm:Riem}
		Suppose $\sM(\Gamma,\sG)_k$ is a Margulis multiverse of entropy $k$. Then the restriction of the pressure form \[\spr:\sT\sM(\Gamma,\sG)_k\times\sT\sM(\Gamma,\sG)_k\rightarrow\R\] 
		is a Riemannian metric for all $k>0$.
	\end{theorem}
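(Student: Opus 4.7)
The plan is to deduce this Theorem almost immediately from Proposition \ref{prop:prZero}, together with the analyticity and codimension-one property established in Lemma \ref{lem:constent}. Since the pressure form is already known to be non-negative definite on all of $\sT\M(\Gamma,\sG)$ by Lemma \ref{lem:prpseudoR}, the only thing left to verify on the constant entropy slice is the absence of non-zero norm zero vectors.

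First I would fix $k>0$ and a class $[\rho]\in\M(\Gamma,\sG)_k$. By Lemma \ref{lem:constent}, $\M(\Gamma,\sG)_k$ is an analytic submanifold of $\M(\Gamma,\sG)$ of codimension one, cut out as the level set $h^{-1}(k)$ of the analytic function $h$. In particular, the tangent space $\sT_{[\rho]}\M(\Gamma,\sG)_k$ consists exactly of those $[\dot\rho]\in\sT_{[\rho]}\M(\Gamma,\sG)$ for which there exists an analytic one-parameter family $\{\rho_t\}_{t\in\R}\subset\sHom_{\M}(\Gamma,\sG)$ with $\rho_0=\rho$, $\left.\tfrac{d}{dt}\right|_{t=0}\rho_t=\dot\rho$, and $h_{\rho_t}=k$ for all $t$ in a neighborhood of $0$.

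Next, I would take an arbitrary $[\dot\rho]\in\sT_{[\rho]}\M(\Gamma,\sG)_k$ with $\spr_{[\rho]}([\dot\rho],[\dot\rho])=0$ and pick such a family $\{\rho_t\}$. Since $h_{\rho_t}$ is constant in $t$, we have
\[\left.\frac{d}{dt}\right|_{t=0}h_{\rho_t}=0.\]
Now I can apply Proposition \ref{prop:prZero} directly: the hypotheses $\spr_{[\rho]}([\dot\rho],[\dot\rho])=0$ and vanishing of the derivative of the entropy along $\rho_t$ force $[\dot\rho]=0$ in $\sT_{[\rho]}\M(\Gamma,\sG)$, hence also in $\sT_{[\rho]}\M(\Gamma,\sG)_k$. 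Combined with the non-negative definiteness from Lemma \ref{lem:prpseudoR}, this shows $\spr$ restricted to the Margulis multiverse is positive definite, that is, Riemannian.

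There is essentially no obstacle in this argument: all the work has been done in the infinitesimal rigidity Theorem \ref{thm:infriggen}, which powers Proposition \ref{prop:prZero}, and in Lemma \ref{lem:constent}, which ensures that tangent vectors to the entropy level set automatically satisfy the entropy-derivative-zero condition. The only delicate point worth a sentence in the write-up is that analyticity of the family $\{\rho_t\}$ is required to invoke Proposition \ref{prop:prZero}, and this is available because $\M(\Gamma,\sG)_k$ is an analytic submanifold, so every tangent vector is represented by an analytic path inside the submanifold.
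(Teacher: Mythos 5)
Your proof is correct and follows exactly the paper's route: the paper's proof of Theorem~\ref{thm:Riem} is the single line ``The result follows from Lemma~\ref{lem:constent} and Proposition~\ref{prop:prZero},'' and you have simply expanded this by making explicit that tangent vectors to the level set $h^{-1}(k)$ automatically satisfy $\left.\tfrac{d}{dt}\right|_{t=0}h_{\rho_t}=0$, that analytic representatives of such tangent vectors exist by Lemma~\ref{lem:constent}, and that non-negativity comes from Lemma~\ref{lem:prpseudoR}. No differences of substance.
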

	\begin{proof}
		The result follows from Lemma \ref{lem:constent} and Proposition \ref{prop:prZero}.
	\end{proof}
	\begin{corollary}
		The pressure form on $\sM(\Gamma,\sG)$ is non-negative definite with signature $\dim(\sM(\Gamma,\sG))-1$. 
		
		In fact, the degenerate direction of the pressure form at any $[\rho]\in\sM(\Gamma,\sG)$ is $[(0,u_\rho)]\in\sH^1_{\sad\circ\rho}(\Gamma,\fg)$.
	\end{corollary}
	\begin{proof}
		We use Theorem \ref{thm:Riem} and Lemma \ref{lem:constent} to observe that the pressure form is non-negative definite. Moreover, for $\rho_t=(L_\rho,(1+t)u_\rho)$ we use Lemma \ref{lem:enteq} and obtain
		\[\frac{h(\rho_t)}{h(\rho)}I(\rho,\rho_t)=\frac{(1+t)}{(1+t)}\frac{h(\rho)}{h(\rho)}I(\rho,\rho)=1.\]
		Therefore, it follows that $\spr_{[\rho]}([(0,u_\rho)],[(0,u_\rho)])=0$ and we obtain that the signature is, in fact, $\dim(\sM(\Gamma,\sG))-1$ and the degenerate direction of the pressure form at $[\rho]$ is precisely $[(0,u_\rho)]$.
	\end{proof}

	\begin{theorem}
		Suppose $\varrho\in\sHom_{\sA}(\Gamma,\sH)$ and $u_0,u_1\in\mathsf{Z}^1_{\varrho}(\Gamma,\sT)$ are such that $(\varrho,u_0),(\varrho,u_1)\in\sHom_{\sM}(\Gamma,\sG)$. Moreover, suppose $h(\varrho,u_0)=k=h(\varrho,u_1)$. Then for $t\in(0,1)$ the representations $\rho_t\defeq (\varrho,tu_1+(1-t)u_0)$ are well defined, $\rho_t\in\sHom_{\sM}(\Gamma,\sG)$ and the following holds:
		\[h(\rho_t)< k.\]
	\end{theorem}
	\begin{proof}
		We use Proposition \ref{prop:convcone} to deduce that $\rho_t$ is well defined and $\rho_t\in\sHom_{\sM}(\Gamma,\sG)$ for $t\in(0,1)$.
		
		Now we will show the entropy inequality. We recall that $\sT=\R^{2n+1}=\ft$. We observe that if $v\in\mathsf{Z}^1_{\varrho}(\Gamma,\sT)=\mathsf{Z}^1_{\varrho}(\Gamma,\ft)$ then $(0,v)\in\mathsf{Z}^1_{\sad\circ(\varrho,w)}(\Gamma,\fg)$ for any $w\in\mathsf{Z}^1_{\varrho}(\Gamma,\sT)$. Indeed, we compute and observe that for all $\gamma,\eta\in\Gamma$,
		\[\sad(\varrho(\gamma),w(\gamma))(0,v(\eta))+(0,v(\gamma))=(0,\varrho(\gamma)v(\eta)+v(\gamma)).\]
		
		Suppose $u,v\in\mathsf{Z}^1_{\varrho}(\Gamma,\sT)$ is such that $\{(\varrho,u+tv)\}_{t\in(-1,1)}\subset\sHom_{\sM}(\Gamma,\sG)$, the entropy $h(\varrho,u)=k$ and $[(0,v)]\in\mathsf{H}^1_{\sad\circ(\varrho,u)}(\Gamma,\fg)$ is a tangent vector to the submanifold $\sM(\Gamma,\sG)_k$ at the point $[(\varrho,u)]\in\sM(\Gamma,\sG)$. 
		
		Hence, we obtain $\left.\frac{d}{dt}\right|_{t=0}h(\varrho,u+tv)=0$. Furthermore, we use Lemma \ref{lem:enteq} to observe that
		\begin{align*}
			I((\varrho,u),(\varrho,u+tv))&=I((\varrho,u),(\varrho,u))+tI((\varrho,u),(\varrho,v))\\
			&=1+tI((\varrho,u),(\varrho,v)).
		\end{align*}
		Hence, it follows that
		\[\left.\frac{d^2}{dt^2}\right|_{t=0}I((\varrho,u),(\varrho,u+tv))=0.\]
		Also, we deduce that,
		\[\left.\frac{d^2}{dt^2}\right|_{t=0}h(\varrho,u+tv)I((\varrho,u),(\varrho,u+tv))=\left.\frac{d^2}{dt^2}\right|_{t=0}h(\varrho,u+tv).\]
		Now using Proposition \ref{prop:bcls} and Remark \ref{rem:prM} we get that \[\left.\frac{d^2}{dt^2}\right|_{t=0}h(\varrho,u+tv)\geqslant0\] 
		with equality if and only if $\spr_{[(\varrho,u)]}([(0,v)],[(0,v)])=0$. Moreover, as \[\left.\frac{d}{dt}\right|_{t=0}h(\varrho,u+tv)=0,\] 
		we use Proposition \ref{prop:prZero} and deduce that $\spr_{[(\varrho,u)]}([(0,v)],[(0,v)])=0$ if and only if  $(0,v)\in\mathsf{B}^1_{\sad\circ(\varrho,u)}(\Gamma,\fg)$ i.e. $v\in\mathsf{B}^1_\varrho(\Gamma,\sT)$. Hence, for $v\notin\mathsf{B}^1_\varrho(\Gamma,\sT)$ we have
		\[\left.\frac{d^2}{dt^2}\right|_{t=0}h(\varrho,u+tv)>0.\] 
		It follows that $h(\varrho,u)$ is a local minimum. Therefore, for $u_0,u_1\in\mathsf{Z}^1_{\varrho}(\Gamma,\sT)$ with $(\varrho,u_0),(\varrho,u_1)\in\sHom_{\sM}(\Gamma,\sG)$ such that $h(\varrho,u_0)=k=h(\varrho,u_1)$ we deduce that
		\[h(\varrho,tu_1+(1-t)u_0)<th(\varrho,u_1)+(1-t)h(\varrho,u_0)=tk+(1-t)k=k\]
		and we conclude our result.
	\end{proof}

	\bibliography{bibliography.bib}
	\bibliographystyle{alpha}
	
\end{document}